\def\ba{\mathbf{a}}
\def\bd{\mathbf{d}}
\def\bg{\mathbf{g}}
\def\br{\mathbf{r}}
\def\bs{\mathbf{s}}
\def\bw{\mathbf{w}}
\def\bx{\mathbf{x}}
\def\by{\mathbf{y}}
\def\beps{\boldsymbol{\varepsilon}}
\def\bxi{\boldsymbol{\xi}}
\def\mbE{\mathbb{E}}
\def\mD{\mathcal{D}}
\def\mF{\mathcal{F}}
\def\mK{\mathcal{K}}
\def\mM{\mathcal{M}}
\def\mN{\mathcal{N}}
\def\mT{\mathcal{T}}
\DeclareMathOperator{\minimize}{minimize}
\renewcommand{\Re}{\mathbb{R}}
\renewcommand{\tilde}{\widetilde}
\renewcommand{\bar}{\overline}
\newtheorem{theorem}{Theorem}[section]
\newtheorem{lemma}[theorem]{Lemma}
\theoremstyle{definition}
\newtheorem{assumption}[theorem]{Assumption}
\theoremstyle{remark}
\newtheorem{remark}[theorem]{Remark}
\numberwithin{equation}{section}
\begin{document}

\title[Line-search 2nd-Order Stochastic Nonconvex Optimization Methods]{LSOS: Line-search Second-Order Stochastic optimization methods for nonconvex finite sums}
\thanks{Research supported by the Executive Programme of Scientific and Technological
        Cooperation between the Italian Republic and the Republic of Serbia for years 2019-21
        (Italian grant no.~RS19MO05, Serbian grant no.~451-03-9/2021-14/200125). D.~di Serafino and M.~Viola were also supported
        by the Istituto Nazionale di Alta Matematica - Gruppo Nazionale per il Calcolo Scientifico (INdAM-GNCS) and by
        the 2019 V:ALERE Program of the University of Campania ``L. Vanvitelli''.}

\author[D. di~Serafino]{Daniela di~Serafino}
\address{Department of Mathematics and Applications ``Renato Caccioppoli'',
    University of Naples Fe\-de\-rico II, via Cintia, Monte S.~Angelo, 80126 Napoli, Italy}
\email{daniela.diserafino@unina.it}

\author[N. Kreji\'c]{Nata\v{s}a Kreji\'c}
\address{Department of Mathematics and Informatics, Faculty of Science,  University of Novi Sad,
    Trg Dositeja Obradovi\'ca 4, 21000 Novi Sad, Serbia}
\email{natasak@uns.ac.rs,natasa.krklec@dmi.uns.ac.rs}

\author[N. Krklec~Jerinki\'c]{Nata\v{s}a Krklec~Jerinki\'c}

\author[M. Viola]{Marco Viola}
\address{Department of Mathematics and Physics,
    University of Campania ``Luigi Vanvitelli'', viale A.~Lincoln 5, 81100 Caserta, Italy}
\email{marco.viola@unicampania.it}

\subjclass[2020]{Primary 65K05, 90C15, 62L20.}

\keywords{Nonconvex finite sums, stochastic optimization methods,
line search, almost-sure convergence, quasi-Newton methods.}

\date{}

\begin{abstract}
    We develop a line-search second-order algorithmic framework for minimizing finite sums. We do not make any convexity assumptions, but require the terms of the sum to be continuously differentiable and have Lipschitz-continuous gradients. The methods fitting into this framework combine line searches and suitably decaying step lengths. A key issue is a two-step sampling at each iteration, which allows us to control the error present in the line-search procedure. Stationarity of limit points is proved in the almost-sure sense, while almost-sure convergence of the sequence of approximations to the solution holds with the additional hypothesis that the functions are strongly convex. Numerical experiments, including comparisons with state-of-the art stochastic optimization methods, show the efficiency of our approach. 
\end{abstract}

\maketitle
\centerline{\footnotesize DATE: June 15, 2022}

\section{Introduction\label{sec:introduction}}

We are interested in solving large finite-sum minimization problems, where the objective function is, e.g., the sample mean of a finite family of possibly nonconvex smooth functions. This is the case of many statistical learning problems, including deep learning and more generally machine learning problems (see, e.g., \cite{Goodfellow:2016book, bottou:2018}), which have received much attention in the last years.
Specifically, we consider problems of the form
\begin{equation} \label{eqn:finite_sum}
    \underset{x \in \Re^n}{\minimize} \; \phi(\bx), \quad \phi(\bx) = \frac{1}{N} \sum_{i=1}^{N}\phi_i(\bx),
\end{equation}
where $\phi(\bx)$ is bounded from below and each $\phi_i(\bx)$ is twice continuously differentiable with Lipschitz-continuous gradient.
Assuming that $ N $ is large, the computation of the objective function and its gradient and Hessian is expensive, and approximations of them are generally used. Subsampling is a natural way of computing these approximations, i.e., for a randomly chosen subsample $ \mM \subset \{1,\ldots,N\} $ with cardinality $ M $ we can approximate $ \phi(\bx) $ and its gradient and Hessian with the functions
\begin{equation} \label{subsample} 
    \begin{array}{rcl}
        \displaystyle f_\mM (\bx)      &=& \displaystyle \frac{1}{M}  \sum_{i \in \mM} \phi_i (\bx), \\
        \displaystyle \bg_\mM(\bx)  &=&  \displaystyle \frac{1}{M}  \sum_{i \in \mM} \nabla\phi_i(\bx), \\
        \displaystyle B_\mM (\bx)    &=&  \displaystyle \frac{1}{M} \sum_{i \in \mM} \nabla^2\phi_i(\bx),
    \end{array}
\end{equation}
respectively. However, other choices are possible and will be actually used in this work.

The general algorithmic scheme we consider is a second-order scheme with line search. In the deterministic setting, line-search second-order methods are successful in terms of global convergence and convergence rate. Most of these methods preserve their local rate of convergence as the step length tends to be $ 1 $ when the iterations come close to the solution. Hence, there is a good tradeoff between benefits and cost, in particular if one applies some second-order method with at least superlinear local convergence. In the stochastic framework the situation is not that simple. A key challenge appears to be the analysis of the gradient error at the new iterate $ \bx_k + t_k \bd_k $, because of the dependence between the step length $ t_k $ and the search direction $ \bd_k $. Since $ t_k $ is not fixed a priori, the line search introduces non-martingale dependencies, which do not allow us to easily estimate the error 
\begin{equation} \label{funerr} 
    | f_{\mN_k}(\bx_k + t_k \bd_k) - \phi(\bx_k + t_k \bd_k)| ,
\end{equation}
even if $ \mN_k $ is generated as an independent identically distributed (i.i.d.) sample at each iteration.  Thus, we propose a class of methods where this error is controlled by additional sampling. For other possibilities, see, e.g.,~\cite{iusem:2019} and ~\cite{krejic:2015oms}.

Stochastic optimization methods exploiting search directions based on second-order information have been widely investigated to get better theoretical and practical convergence properties than first-order stochastic methods, for either finite sums or general stochastic problems, especially when badly-scaled problems must be solved. Stochastic versions of Newton-type methods are discussed, e.g., in~\cite{bellavia:2019imajna,bollapragada:2019,byrd:2011,byrd:2012,curtis:2020,ruppert:1985,spall:1994,spall:1995,spall:1997},
and variants of the adaptive cubic regularization schemes are proposed in~\cite{bellavia:2020imajna,park:2020}.
In particular, stochastic quasi-Newton methods are analyzed, either in the strongly convex or in the nonconvex setting, in~\cite{berahas:2020,byrd:2011,byrd:2016,chen:2020,gower:2016,berahas:2021,mokhtari:2014,mokhtari:2015,moritz:2016,wang:2017siopt,wang:2019}.

For strongly convex functions of type~\eqref{eqn:finite_sum}, in~\cite{moritz:2016} Moritz et al.~propose a stochastic L-BFGS algorithm based on the same inverse Hessian approximation as in~\cite{byrd:2016}, but use SVRG~\cite{johnson:2013svrg} instead of the standard stochastic gradient approximation (see, e.g., \cite{bottou:2018}). This L-BFGS algorithm, which applies a constant step length, has Q-linear rate of convergence of the expected value of the error in the objective function. A modification to this L-BFGS scheme is proposed by Gower et al.~in~\cite{gower:2016}, where a stochastic block BFGS update is used, in which the vector pairs for updating the inverse Hessian are replaced by matrix pairs gathering directions and matrix-vector products between subsampled Hessians and those directions. The resulting algorithm uses a constant step length and has Q-linear convergence rate of the expected value of the objective function error, as in the previous case, but appears more efficient by numerical experiments.
More recently, the Incremental Quasi-Newton (IQN) method~\cite{mokhtari:2018} has been developed, which has been proved to have local superlinear convergence. IQN differs from other stochastic quasi-Newton methods as it uses aggregated information on variables, gradients, and quasi-Newton Hessian approximations to reduce the noise of gradient and Hessian approximations, applies a cyclic scheme to update the functions, and  approximates each individual function by a Taylor's expansion where the linear and quadratic terms are evaluated with respect to the same iterate. Nevertheless, despite we do not use such a combination of techniques, we will show that the stochastic L-BFGS method presented in this work, which exploits the Jacobian sketching described in~\cite{gower:2020}, in practice is often faster than IQN on widely used test problems.

When dealing with nonconvex finite sums, the (L-)BFGS updates may lead to indefinite Hessian approximations, which in turn may prevent convergence. To cope with this problem, specifically suited versions of stochastic L-BFGS algorithms were developed -- see, e.g., \cite{berahas:2020,wang:2017siopt}. In particular, in~\cite{wang:2017siopt} the authors propose to use damping to guarantee positive definiteness of the Hessian approximations, and combine the modified L-BFGS update with the SVRG gradient approximation. The resulting algorithm, named SdLBFGS-VR, equipped with a constant step length, is guaranteed to bring the expected value of the gradient norm (computed among all the iterations) below a predefined threshold in a finite number of steps. This method is used for comparison purposes here.

\subsection*{Contribution and outline of this work\label{sec:contribution}}

We propose a Line-search Second-Order Stochastic (LSOS) algorithmic framework for problem~\eqref{eqn:finite_sum}, where Newton and quasi-Newton directions in a rather broad meaning are used. Inexactness is allowed in the sense that the Newton-type direction can be obtained as inexact solution of the corresponding system of linear equations. The objective function is approximated by subsampling, while other approximations can be used for the gradient and the Hessian. We consider a two-step sampling at each iteration, which allows us to control the error~\eqref{funerr} introduced by the line-search procedure. {The second sampling can be of arbitrary small size, even the sample size 1 is sufficient,  and hence it does not increase the computational costs significantly. If the proposed line search is unsuccessful in some prefixed (possibly very large) number of iterations, the algorithm switches to predefined step sizes, i.e., to the SA method.} 

We prove that limit points are stationary in the almost sure sense for bounded functions with Lipschitz-continuous gradients, while the sequence of approximations to the solution converges almost surely in the case of strongly-convex functions. Although we cannot prove that our class of methods has superlinear convergence rate, we show by numerical experiments that these methods are competitive or faster than state-of-the art second-order stochastic optimization methods. {Furthermore, our preliminary experiments show that the line search steps are accepted almost always and the switching point, from the line search to the SA method, is never reached.} 
An additional advantage of the proposed method is that it can be extended to a more general class of problems where the sample is infinite (for example, online training) by performing simple modifications in the LSOS algorithm (see Remark~\ref{rem:infinite_sample}).

The paper is organized as follows. In Section~\ref{sec:lsos}, we introduce the LSOS framework for finite sum problems. In Section~\ref{sec:convergence}, we prove almost sure convergence to a stationary point of any algorithm fitting into the general LSOS framework. In Section~\ref{sec:lbfgs-saga}, we present a specialization of the LSOS framework, named LSOS-BFGS, which exploits a mini-batch variant of the SAGA algorithm~\cite{defazio:2014saga}, used in~\cite{gower:2020}, and approximates the inverse Hessian by means of a stochastic version of the limited-memory BFGS (L-BFGS) proposed in~\cite{byrd:2016}. Moreover, for nonconvex objective functions, we propose a modified version of the L-BFGS update in~\cite{byrd:2016}, which is inspired by the damping strategy used in~\cite{wang:2017siopt}. In Section~\ref{sec:experiments}, we compare a MATLAB implementation of the LSOS-BFGS algorithm with some state-of-the-art methods in the solution of both nonconvex and convex problems of the form \eqref{eqn:finite_sum}. Finally, we draw some conclusions in Section~\ref{sec:conclusions}.

\subsection*{Notation}

$\mbE(x)$ denotes the expectation of a random variable $x$ 
and $\mbE(x | \mF)$ the conditional expectation of $x$ for a given $\sigma$-algebra $\mF$, where the $\sigma$-algebra is determined by random variables precisely defined in the sequel.  $ \|\cdot\| $ indicates either the Euclidean vector norm or the corresponding induced matrix norm. 
$\Re_+$ and $\Re_{++}$ denote the sets of real non-negative and positive numbers, respectively, and $ \mN = \{1,\ldots,N\} $. Vectors are written in boldface and subscripts indicate the elements of a sequence, e.g., $\{ \bx_k \}$. Furthermore, $ \bg(\bx) $ and $ B(\bx) $ denote approximations of the gradient and the Hessian of $ \phi $ at $ \bx $; we also use $ \bg_k $ and $ B_k $ when $ \bx = \bx_k$. Finally, ``a.s.'' abbreviates ``almost sure/surely''.

\section{Algorithmic framework\label{sec:lsos}}

We consider second-order methods, where the search direction is found by solving a Newton-type system of linear equations. The dimension of the system may be very large and thus a natural choice is to solve it inexactly, e.g., using some iterative procedure. We consider search directions $ \bd_k $ such that the following inexact Newton condition is satisfied:
\begin{equation} \label{eq:INerror}
    \|B_k \bd_k + \bg_k\| \leq \delta_k \|\bg_k\| ,
\end{equation}
where $ \bg_k $ may be, e.g., $ \bg_{\mN_k}(x_k) $ for a certain sample $ \mN_k $. We assume that the approximate gradient $ \bg_k$  and the approximate Hessian $ B_k $ are conditionally independent. This assumption will be discussed later and we will show that quasi-Newton methods satisfy it in the case of finite-sum problems under standard conditions. 

Our class of methods fits into the framework of Algorithm~\ref{alg:LSOS}. It combines the line-search with the Stochastic Approximation (SA) approach~\cite{robbins:1951}, where the step-length sequence $ \{\alpha_k\} $ is non-summable but square-summable. The main drawback of the latter step lengths is that they quickly become very small and thus the convergence may be extremely slow. On the other hand, the line search introduces non-martingale errors, which are difficult to estimate and bound. The key idea in our algorithm is to combine the two approaches to get a.s.~convergence, {but keeping the advantage of hopefully large step sizes coming from the line search at least at initial iterations}. Additional sampling is performed within the algorithm to control the non-martingale errors. 

\begin{algorithm}[h!]
    \caption{Line-search Second-Order Stochastic (LSOS) method\label{alg:LSOS}}
    {\small
        \begin{algorithmic}[1]
            \STATE given $ \bx^0 \in \Re^n $, $ B_0 \in \mathbb{R}^{n \times n}, $ $ \eta, \beta \in (0,1) $, 
            $ \{\alpha_k\}, \{\zeta_k\} \subset \Re_{++} $, $ \{\delta_k\} \subset \Re_{+} $, $ c_{\min}, C_{\max} \in \Re_{+} $, $ K_{\max}  \in \mathbb{N}$
            \STATE $ K_f = 0 $, $ ind = 0 $, $ k = 0 $
            \WHILE {stop criterion not satisfied}
            \STATE choose $ \mN_k \subset \mN $ randomly and uniformly and compute $ \bg_k $ \label{alg:sampling_1} 
            \STATE find a search direction $\bd_k $ such that \label{alg:inexact_direction} 
            \begin{equation} \label{eqn:alg_finite_sum_inexact_direction}
                \| B_k \bd_k + \bg_k \| \leq \delta_k \| \bg_k \|
            \end{equation}
            \IF {$ ind = 0 $}
            \STATE find the smallest integer $j \ge 0$ such that the step length $t_k=\beta^j$ satisfies \label{alg:line_search} 
            \begin{equation} \label{eqn:alg_finite_sum_line_search}
                f_{\mN_k}(\bx_k + t_k \bd_k) \leq f_{\mN_k}(\bx_k) + \eta\,t_k\, \bg_k^\top \bd_k + \zeta_k
            \end{equation}
            \STATE $ \bar \bx_k = \bx_k + t_k \bd_k $
            \STATE choose $ \mD_k \subset \mN $ randomly and uniformly \label{alg:sampling_2}
            \IF {$ \displaystyle f_{\mD_k}(\bar \bx_k) \le  f_{\mD_k}(\bx_{k}) - c_{\min} \|\bg_{\mD_k}(\bx_{k})\|^2 + C_{\max} \, \zeta_k $ \label{alg:decr_cond_2}}
            \STATE $ \bx_{k+1} = \bar \bx_k $ \label{alg:new_iterate_ls}
            \ELSE
            \STATE $ \bx_{k+1} = \bx_k $
            \STATE $ K_f = K_f + 1$ 
            \IF {$ K_f > K_{\max} $}
            \STATE $ ind = 1$
            \ENDIF
            \ENDIF
            \ELSE
            \STATE {$ t_k = \alpha_k $}
            \STATE {$ \bx_{k+1} = \bx_k + t_k \bd_k $} \label{alg:new_iterate_sa}
            \ENDIF
            \STATE compute $ B_{k+1} $.  \label{alg:new_hessian}
            \STATE $ k = k + 1 $
            \ENDWHILE
        \end{algorithmic}
    }
\end{algorithm}

We comment Algorithm~\ref{alg:LSOS} in detail. At the beginning of each iteration $ k+1 $, the quasi-Newton matrix $ B_k $ and the iterate $ \bx_k$ are available. First, we generate the new sample {$ \mN_k $ and compute $ \bg_k. $ Notice that we do not impose any assumption on $ \mN_k $ except the unbiasedness and the finite variance of the gradient approximation -- see Assumption \ref{ass:gradient} ahead.  The search direction $ \bd_k $ is computed such that ~\eqref{eqn:alg_finite_sum_inexact_direction} holds. We then proceed to the step-size computation. The variable $ ind $ is governing the switch between the line-search step length and the SA step length; as long as $ ind = 0 $ we perform a line search to get the step size $ t_k $ such that the nonmonotone Armijo condition (\ref{eqn:alg_finite_sum_line_search}) is satisfied. That way we get a new candidate point $\bar \bx_k. $ Having the candidate point we perform the additional sampling, generating a sample $ \mD_k$ uniformly and randomly. Notice that there are no conditions on the size of $ \mD_k,$ i.e., it can be a sample of size 1; then we check if the candidate point $\bar \bx_k $ satisfies the decrease condition for the approximate function $ f_{\mD_k} $. In this case, we accept the update $ \bx_{k+1} = \bar \bx_k, $ the line search is considered to be successful and we proceed to generate the  Hessian approximation $ B_{k+1} $ that will be used in the next iteration. 

In the case of insufficient decrease for the approximate function $ f_{\mD_k} $, the line search step is not successful, we discard the candidate point $\bar \bx_k  $ and declare $ x_{k+1} = x_k. $ The counter $ K_f $ is increased. As long as $ K_f < K_{\max} $ we keep with the line search procedure. However if $K_{\max}$ is reached we change the indicator variable to $ ind=1$ and the algorithm switches to the predefined step sizes $ \alpha_k,$ i.e.~to the SA method.

The additional check at line 10 is one of the key novelties of the LSOS algorithm and the reasoning behind this check is the following.  Under the assumptions stated in the next section, one can prove that the step size $t_k$ in the line search is bounded from below by $t_{\min}:=\beta (1-\eta) \mu^2/(L^2 (1+\delta_{max})^2),$ provided that $\delta_k \leq \mu / (2L)$ and $ \bg_k=\bg_{\mN_k}(\bx_k).$ In that case, there holds 
$$f_{\mN_k}(\bar{\bx}_k) \leq  f_{\mN_k}(\bx_k) - c \|\bg_k\|^2,  $$
where $c:=t_{\min} \eta / (2 L).$ In other words, this line search provides sufficient decrease with respect to $f_{\mN_k}$. Thus, we check if this new point would be of similar quality to the independently chosen subsample $\mD_k$ and the corresponding function $f_{\mD_k}$ with a deterioration controlled by $\zeta_k$.  Implicitly, we can consider this as a check of similarity of the functions $f_i$. If these functions are similar, then performing a line search on a subsampled function is probably beneficial since it is a good, but cheaper, approximation of the objective function $\phi$. Otherwise, the dissimilarity of the functions is too big and the line-search step is not successful. If the unsuccessful steps occur sufficiently many times ($K_{max}$ times), the algorithm is switched to the SA phase, which may be slower, but more stable for that kind of problems. This reasoning helps us to take the advantages of both the SA and LS variants, having a theoretically sound algorithm with good practical behavior.

We note that both decrease rules (lines \ref{alg:line_search} and \ref{alg:decr_cond_2} of the algorithm) are nonmonotone. The reason is that the inexact direction $\bd_k$ does not need to be a decreasing direction for $f_{\mN_k}$, but it might be a good direction for the original objective function. The term $ \zeta_k $ makes the step always well defined as for $ t_k $ small enough one can always satisfy the condition \eqref{eqn:alg_finite_sum_line_search} and hence we have the finite termination  of the backtracking loop, see Lemma~\ref{le:finite} below. Thus the algorithm is well defined. In the case $ \bd_k $ is a decreasing direction, the nonmonotone rule allows us to take larger steps. As regards the decrease condition at line \ref{alg:decr_cond_2}, we notice  that the lack of decrease in $ f_{\mD_k} $  is not necessarily an increase in the function value and given that $ C_{\max}, \, \zeta_k > 0 $, we can regard the condition at line~\ref{alg:decr_cond_2} of Algorithm~\ref{alg:LSOS} as a nonmonotone line-search condition. However, as specified in the next section, we have $ \zeta_k \to 0 $ and thus this condition becomes stricter and stricter.

\begin{lemma} \label{le:finite}
Let $ f_i, \, i=1,\ldots,N$, be continuous. Given $ \bd_k, \bg_k \in {\mathbb R}^n, $  and $ \zeta_k > 0 $ the backtracking defined at line 7 of the LSOS algorithm has finite termination.  
\end{lemma}
    
\begin{proof}
The function $f_{\mN_k} $ is continuous, $ \zeta_k > 0 $ and  for $ j $ large enough we can always take  $ t_k = \beta^j $  such that  $  t_k \bd_k^T \bg_k $ and $ t_k \bd_k $ are sufficiently small and therefore  $ f_{\mN_k}(\bx_k + t_k \bd_k) -  f_{\mN_k}(\bx_k) - \eta t_k \bd_k^T \bg_k \leq \zeta_k. $ 
\end{proof}

The integer $ K _{\max} $ is arbitrarily large, but fixed, and controls the maximum number of iterations in which we might have an increase in the additional sampled function $f_{\mD_k},$ i.e. the maximal number of unsuccessful line-search iterations. At the end of each iteration, we compute the new approximate Hessian $ B_{k+1}$.

In the next section we show that this algorithm converges almost surely. Summarizing the properties of the algorithm, we can see that there are two possible scenarios. The first one is that the number of unsuccessful line-search steps is smaller than $ K_{\max} $ when the stopping criteria is reached and hence the method has generated an iterative sequence with the line-search procedure. The second possibility is that $ K_f = K_{\max} $ before the stopping criteria is reached and we have that, starting from a given point, the iterate sequence has been generated by SA. Clearly, the role of $ K_{\max} $ is essential as its value determines the properties of the sequence generated by LSOS. We observe that it is very likely that a suitable value of $ K_{\max} $ can be problem dependent. Nevertheless, by choosing a relatively large $ K_{\max} $, one can enforce line-search steps which yield larger step sizes if successful and hence faster convergence. Of course it might happen that many line-search steps are unsuccessful, thus resulting in a waste of time for the algorithm. Anyway, all our experiments, see Section~\ref{sec:experiments}, indicate that the line search iterations are successful in a vast majority of cases, the number of discarded candidate points is in the interval 1-6\%, and $ K_{\max} $ is never reached.}

{An additional question that may arise here regards the second scenario. Assume that $ K_{\max} $ is reached and thus we switch to the SA step sizes. A number of successful methods with approximate gradients and variance reduction are defined for strongly convex problems (see, e.g., \cite{johnson:2013svrg,defazio:2014saga,gower:2020}). Some of these methods work with fixed step sizes and Algorithm~\ref{alg:LSOS} implies decreasing step sizes. Clearly it would be better to use a fixed step size in this scenario, assuming that the search direction $ \bd_k $ satisfies the variance-reduction properties. One could reformulate Algorithm 1 such that it covers this possibility. However such a reformulation would imply specifying a number of additional assumptions on the gradient approximation as well as on the construction of the Hessian approximation $ B_k $. Our intention here was to propose a rather general scheme, so we did not consider this case separately. But we tested the method against the fixed step-size methods with variance reduction in Section~\ref{sec:experiments} and demonstrated the advantages of the proposed algorithm. Given that we never came close to $ K_{\max} $, the second scenario did not occur in our tests. }

{It is worth mentioning that in \cite{paquette:2020} and \cite{berahas:2021siopt} the authors also consider line-search procedures in the stochastic framework. In both cases the key assumptions are that the sequences of random estimates of the function and the gradient are probabilistically accurate in the submartingale sense. In the case of subsampled functions and gradients the condition reduces to taking the size of $\mN_k$ large enough to be able to satisfy the required accuracy for both the function and the gradient. Moreover, we note that in both cases, while an Armijo-like condition is checked (in \cite{berahas:2021siopt} the authors use a nonmonotone condition relying on an a-priori knowledge of the function evalution accuracy), no formal backtracking is performed. In fact, if the line-search condition is not satisfied by the initial step length, then the authors propose to immediately reject the iterate, shrink the step length and recompute the gradient and function approximations from scratch. These ingredients allow the authors to develop stochastic Armijo-like line-search methods that need neither the additional sample $ \mD_k $ nor switching to SA in any scenario. We note that the sample size fulfilling the aforementioned probabilistic assumptions is rather large while we do not impose any condition on $ \mN_k $ besides unbiasedeness and finite variance of the gradient approximation. Hence, it is quite difficult to compare the approach we propose here with the ones in~\cite{paquette:2020,berahas:2021siopt}. Furthermore, the complexity results given in the mentioned works rely on fixed probabilities of the estimated function and gradient, while we do not present formal complexity results here. To offer some insight into the complexity of the proposed method, in particular with respect to the possible increase of the oracle complexity due to the independent sampling of $ \mN_k $ and $ \mD_k $, in Section~\ref{sec:experiments} we provide an empirical analysis in terms of oracle complexity for LSOS, comparing it with state-of-the-art methods in terms of number of data accesses.}

\section{Convergence theory\label{sec:convergence}}

We state more formally our assumptions on the minimization problem and on some quantities used in Algorithm~LSOS.

\begin{assumption} \label{ass:phi_properties}
    The function $ \phi $ is bounded from below by $ \phi^*$ and the functions $ \phi_i $ have Lipschitz-continuous gradients with Lipschitz constant $L$.
\end{assumption}
Although we do not suppose $ \phi $ is strongly convex, we make the following assumption on the approximate Hessians computed by the algorithm. Without loss of generality we take the same $L$ as in the previous assumption. 
\begin{assumption} \label{ass:B(xk)}
    There exist $ \mu, L > 0 $ such that
    \begin{equation*} 
        \mu I \preceq B_k  \preceq L I
    \end{equation*}
    for all $k$. 
\end{assumption}

We also specify some properties of the gradient approximation. To this aim, we denote by $ \beps_g(\bx) $ the error in the approximation of $ \nabla \phi(\bx) $:
\begin{equation} \label{eqn:gradient1} 
    \nabla \phi(\bx) = \bg(\bx) + \beps_g(\bx) ,
\end{equation}
and by $ \mF_k $ the $ \sigma$-algebra identified by $ \{\bx_0,\bx_1,\ldots,\bx_k\} $.
\begin{assumption} \label{ass:gradient}
    There exists a constant $ \bar c_1 > 0 $ such that
    \begin{equation} \label{eqn:gradient2}
        \mbE(\bg_k(\bx_k) | \mF_k) = \nabla \phi(\bx_k) \; \mbox{ and } \; \mbE(\|\varepsilon_{g_k}(\bx_k)\|^2|\mF_k) \leq \bar c_1 . 
    \end{equation}
    for all $k$.
\end{assumption}
\noindent
In other words, we assume that the expected gradient noise is zero and the variance of the gradient errors
is bounded. 

Finally, we make some (standard) assumptions on the sequences $ \{\alpha_k\}, \{\zeta_k\} \subset \Re_{++} $ and $ \{\delta_k\} \subset \Re_{+} $.
\begin{assumption} \label{ass:sequences}
    \begin{eqnarray*}
        \sum_{k=1}^{\infty} \zeta_k < \infty , & &\\
        \sum_{k=0}^{\infty} \alpha_k = \infty, & \quad & \sum_{k=1}^{\infty} \alpha_k^2 < \infty , \\
        \delta_k \to 0, \qquad & \quad &  \sum_{k=1}^{\infty} \delta_k \alpha_k < \infty.
    \end{eqnarray*} 
\end{assumption}

Let us recall a few properties of the algorithm here. In the first iterations of the algorithm, non-descent directions are likely to occur; however, by requiring $\zeta_k>0$ we ensure that the line search remains well defined, {by Lemma~\ref{le:finite}}.  Furthermore, by~Assumption~\ref{ass:sequences} it is $\zeta_k \to 0$, which implies that Algorithm~\ref{alg:LSOS} eventually determines a descent direction for the current approximation of the objective function. At the $ k $-th iteration, we may reject the update $ \bar \bx_{k+1} = \bx_k + t_k \bd_k $ obtained with the line search, if the condition at line~\ref{alg:decr_cond_2} of the algorithm is not satisfied. However, only a finite number of rejections, specified by $K_{\max}$, is allowed. In any case the new approximate Hessian $ B_{k+1} $ is computed at the end. The $ (k+1) $-st iteration starts with sampling at line~\ref{alg:sampling_1}, and the sample $ \mN_{k+1} $ is used to compute the approximations of the gradient and the function. Thus $ B_{k+1} $ and $ \bg_{k+1} $ are conditionally independent in the sense specified by the following lemma.

\begin{lemma} \label{le:independence}
    The approximate gradient  $ \bg_k$ and the approximate Hessian $ B_k $ generated by Algorithm~\ref{alg:LSOS} satisfy
    $$ \mbE(B_k \bg_k | \mF_k) = B_k \mbE(\bg_k | \mF_k). $$   
\end{lemma}

Next, we give some properties of the directions $\bd_k$ generated by Algorithm~\ref{alg:LSOS}. Throughout this section we use $ \displaystyle \delta_{\max} = \max_k \delta_k$.

\begin{lemma} \label{dklemma}
    Suppose that
    Assumption~\ref{ass:B(xk)} holds and $\delta_k \leq \mu/(2L)$. Then
    \begin{equation} \label{eqn:ls_conv_descent_bound}
        \bg_k^\top \bd_k \leq -\frac{1}{2L} \|\bg_k\|^2
    \end{equation}
    and 
    \begin{equation} \label{dk2}
        \|\bd_k\|^2 \leq \frac{(\delta_{max}+1)^2}{\mu^2}\|\bg_k\|^2 .
    \end{equation}
\end{lemma}  

\begin{proof}
Let $ \br_k = B_k \bd_k + \bg_k $. We have
\begin{equation} \label{dkrk}
    \bd_k = B_k^{-1} \br_k - B_k^{-1} \bg_k. 
\end{equation}
Assumption~\ref{ass:B(xk)} together with~\eqref{eqn:alg_finite_sum_inexact_direction} implies
\begin{eqnarray*}
    \bg_k^\top \bd_k & = & \bg_k^\top  B_k^{-1} \br_k - \bg_k^\top  B_k^{-1} \bg_k \\
    & \leq  & \|\bg_k\| \|B_k^{-1}\| \| \br_k\| - \frac{1}{L} \|\bg_k\|^2 \\
    & \leq & \frac{1}{\mu} \delta_k \|\bg_k\|^2  - \frac{1}{L} \|\bg_k\|^2 \\
    & = & \left(\frac{\delta_k}{\mu}  - \frac{1}{L} \right)\|\bg_k\|^2.
\end{eqnarray*}
Since  $\delta_k \leq \frac{\mu}{2 L}$, we conclude that~\eqref{eqn:ls_conv_descent_bound} holds. Furthermore, for each $k$ we have 
\begin{equation} \label{dkbound}
    \|\bd_k\| =\|B_k^{-1} (\br_k - \bg_k)\|\leq \frac{1}{\mu}(\|\br_k\|+\|\bg_k\|)\leq \frac{\delta_k+1}{\mu}\|\bg_k\| ,
\end{equation}
thus, squaring and using $\delta_k \leq \delta_{\max}$ we obtain~\eqref{dk2}.
\end{proof}

The following theorem on the convergence of perturbed nonnegative supermartingales~\cite{robbins:1971} is used for proving the convergence when the step-length choice corresponding to $ ind = 1 $ is activated.
\begin{theorem} \label{thm:rand_variable_conv}
Let $ U_k, \beta_k, \xi_k, \rho_k \geq 0 $ be $ \mF_k$-measurable random variables such that
$$ \mbE(U_{k+1} | \mF_k) \leq  (1+\beta_k) U_k + \xi_k - \rho_k, \quad k=1,2,\ldots \, . $$
If $ \displaystyle \sum_k \beta_k < \infty $ and $ \displaystyle \sum_k \xi_k < \infty $, then $ U_k \to U < \infty $ a.s. and $ \displaystyle \sum_k \rho_k < \infty $ a.s..
\end{theorem}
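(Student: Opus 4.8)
The statement is the classical Robbins--Siegmund almost-sure convergence theorem, and the plan is to reduce it to Doob's martingale convergence theorem by building an auxiliary nonnegative supermartingale. First I would exploit the summability of $\{\beta_k\}$: since $\beta_k \ge 0$ and $\sum_k \beta_k < \infty$, the products $b_k = \prod_{j=1}^{k}(1+\beta_j)$ form a nondecreasing sequence converging to a finite limit $b_\infty \in [1,\infty)$, so that $1 \le b_k \le b_\infty$ for all $k$. Dividing the hypothesis by $b_k$ and setting $\tilde U_k = U_k / b_{k-1}$ turns the recursion into $\mbE(\tilde U_{k+1} \mid \mathcal{F}_k) \le \tilde U_k + \xi_k/b_k - \rho_k/b_k$, which removes the multiplicative factor $(1+\beta_k)$ at the cost of merely rescaled additive terms.

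Next I would define $Z_k = \tilde U_k - \sum_{j=1}^{k-1} \xi_j/b_j + \sum_{j=1}^{k-1} \rho_j/b_j$ and verify directly from the rescaled recursion that $\mbE(Z_{k+1} \mid \mathcal{F}_k) \le Z_k$, so that $\{Z_k\}$ is a supermartingale. Although $Z_k$ need not be nonnegative, the bound $b_j \ge 1$ gives $\sum_j \xi_j/b_j \le \sum_j \xi_j < \infty$, whence $Z_k \ge -\sum_{j\ge 1}\xi_j =: -C$ with $C < \infty$. Thus $Z_k + C$ is a nonnegative supermartingale, and Doob's theorem yields that $Z_k$ converges almost surely to a finite limit.

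Finally I would disentangle the pieces. Because $\sum_j \xi_j/b_j$ converges, the a.s.\ convergence of $Z_k$ is equivalent to that of $\tilde U_k + \sum_{j=1}^{k-1} \rho_j/b_j$. The partial sums $\sum_{j=1}^{k-1} \rho_j/b_j$ are nondecreasing (as $\rho_j \ge 0$), hence converge to a limit in $[0,\infty]$, while $\tilde U_k \ge 0$; since their sum tends to a finite value, each summand must converge separately, forcing $\sum_j \rho_j/b_j < \infty$ and $\tilde U_k$ to a finite limit. Using $1 \le b_j \le b_\infty$ once more gives $\sum_j \rho_j < \infty$ a.s.\ and $U_k = b_{k-1}\tilde U_k \to U$ a.s.

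The main obstacle is the correct construction of the supermartingale: the $\xi$-term must be absorbed so that $Z_k$ is only \emph{bounded below} rather than genuinely nonnegative, and the concluding step relies essentially on the monotonicity of the $\rho$-sum together with the nonnegativity of $U_k$ to split the single limit into two. If $\{\beta_k\}$ and $\{\xi_k\}$ are themselves random, the same argument applies verbatim on the almost-sure event where both series converge.
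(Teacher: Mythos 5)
The paper does not actually prove \cref{thm:rand_variable_conv}: it imports the result from the literature with a citation (it is the Robbins--Siegmund convergence theorem), so there is no in-paper argument to compare yours against. Your proof is the classical one for that theorem: rescale by $b_k=\prod_{j\le k}(1+\beta_j)$ to eliminate the multiplicative factor, fold the accumulated $\xi$- and $\rho$-terms into $Z_k$, verify the supermartingale inequality, shift to obtain a nonnegative supermartingale, invoke Doob's convergence theorem, and then disentangle the limit using the monotonicity of the $\rho$-sums together with $\tilde U_k\geq 0$. All of these steps are correct when $\sum_k\beta_k$ and $\sum_k\xi_k$ are deterministic, and that is precisely the situation in which the paper applies the theorem: in \cref{thm:SOS_convergence} and its relatives, $\beta_k$ and $\xi_k$ are built from the deterministic sequences $\alpha_k$ and $\delta_k$.

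One caveat: your closing remark, that for random $\{\beta_k\},\{\xi_k\}$ ``the same argument applies verbatim on the almost-sure event where both series converge,'' is too quick. If $C=\sum_j\xi_j$ is random, it is not $\mathcal{F}_k$-measurable, so $Z_k+C$ is no longer a supermartingale; and restricting a supermartingale to an event does not preserve the supermartingale property either. The standard repair is localization: for each $a\in\N$ define the stopping time $\tau_a=\inf\{k:\sum_{j\le k}\xi_j/b_j>a\}$; the stopped process $Z_{k\wedge\tau_a}$ is a supermartingale bounded below by $-a$, hence converges a.s.; on $\{\tau_a=\infty\}$ it coincides with $Z_k$, and the union of these events over $a\in\N$ exhausts $\{\sum_j\xi_j<\infty\}$ up to a null set. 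Since the statement as written allows all four sequences to be random, this extra step is needed for full generality, although it is immaterial for the use the paper makes of the result.
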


\smallskip
{
Now we are ready to prove convergence results for Algorithm~LSOS. Given that two scenarios are possible, i.e.~$ K_{\max} $ is reached or not, we consider the following two theorems and then state the overall convergence result by combining them. 

\begin{theorem} \label{th:main1}
Let Assumptions~\ref{ass:phi_properties}-\ref{ass:sequences} hold, $ \{\bx_k\} $ be a sequence generated by Algorithm~\ref{alg:LSOS} and  assume that $ K_f = K_{\max} +1 $ is reached.  
Then
\begin{equation} \label{pr30}
    \liminf_{k \to \infty}\|\nabla\phi(\bx_k)\|=0 \quad \mbox{a.s.} \, . 
\end{equation}
\end{theorem}
}
\begin{proof}
Given that $ K_f> K_{max} $, the SA step length is eventually chosen. Then there exists $\bar{k}$ such that the SA step length is chosen for all $k\geq \bar{k}$. Assumption~\ref{ass:gradient} implies 
\begin{eqnarray} \label{pr21} 
    \mbE(\|\bg_k\|^2 | \mF_k) & \leq & 2 \left( \mbE(\|\nabla \phi(\bx_k)\|^2 | \mF_k) + \mbE(\|\varepsilon_{g_k}(\bx_k)\|^2 | \mF_k) \right) \\ \nonumber
    & \leq & 2 \left( \|\nabla \phi(\bx_k)\|^2+\bar c_1 \right).
\end{eqnarray} 
Moreover, we have 
\begin{eqnarray*} 
    \mbE(\|\bg_k\| | \mF_k)&\leq& \sqrt{\mbE(\|\bg_k\|^2 | \mF_k)} 
    \leq \sqrt{2 \left( \|\nabla \phi(\bx_k)\|^2+\bar c_1 \right)}\\ \nonumber
    &\leq & \sqrt{2} \left( \|\nabla \phi(\bx_k)\|+\sqrt{\bar c_1} \right).
\end{eqnarray*} 
Recall that for all $k \geq \bar{k}$ we have $ \bx_{k+1} = \bx_{k}+\alpha_k \bd_k $, where $ \alpha_k $ is pre-determined and $\bd_k$ satisfies~\eqref{eqn:alg_finite_sum_inexact_direction} in algorithm~\ref{alg:LSOS}. From now on, we assume $k \geq \bar{k}$. Notice that Assumptions~\ref{ass:B(xk)} and~\ref{ass:gradient} together with Lemma~\ref{le:independence} applied to $B_k^{-1}$ imply 
\begin{eqnarray} 
    \nabla\phi(\bx_k)^\top \mbE(  B_k^{-1} \bg_k| \mF_k)&=&\nabla\phi(\bx_k)^\top  B_k^{-1} \mbE( \bg_k| \mF_k)\\ \nonumber
    &=& \nabla\phi(\bx_k)^\top  B_k^{-1} \nabla\phi(\bx_k)\geq \frac{1}{L} \|\nabla\phi(\bx_k)\|^2.
\end{eqnarray}
This and~\eqref{dkrk} lead to
\begin{equation} \label{pr24} 
\begin{split}
    \mbE(\alpha_k \nabla\phi(\bx_k)^\top \bd_k  | \mF_k) &= \alpha_k \nabla\phi(\bx_k)^\top \mbE( \bd_k  | \mF_k)\\
    &= \alpha_k \nabla\phi(\bx_k)^\top \mbE(  B_k^{-1} \br_k - B_k^{-1} \bg_k| \mF_k)\\
    &\leq \alpha_k \left(\|\nabla\phi(\bx_k)\| \mbE(  \|B_k^{-1}\| \| \br_k \|  | \mF_k)- \frac{1}{L} \|\nabla\phi(\bx_k)\|^2\right)\\
    &\leq  \alpha_k \left(\|\nabla\phi(\bx_k)\| \mbE(   \frac{\delta_k}{\mu} \|\bg_k\|  | \mF_k)- \frac{1}{L} \|\nabla\phi(\bx_k)\|^2\right)\\
    &\leq  \alpha_k \|\nabla\phi(\bx_k)\|^2\left(\frac{\sqrt{2} \delta_k}{\mu}-\frac{1}{L}\right)
    + \alpha_k \|\nabla\phi(\bx_k)\| \frac{\sqrt{2\,\bar c_1} \delta_k}{\mu}.
\end{split}
\end{equation} 
Using Assumption~\ref{ass:phi_properties} and the descent lemma~\cite[Proposition~A24]{bertsekas:1999book}, we get
$$ \phi(\bx_{k+1})
\leq \phi(\bx_k)  + \alpha_k \nabla\phi(\bx_k)^\top \bd_k + \frac{1}{2} \alpha_k^2 L\, \|\bd_k\|^2.  $$
Applying the conditional expectation and using \eqref{pr24} and \eqref{dk2} we obtain 
\begin{eqnarray} \label{pr25} 
    \mbE(\phi(\bx_{k+1})  | \mF_k)
    &\leq&
    \mbE(\phi(\bx_k)| \mF_k)  + \alpha_k \|\nabla\phi(\bx_k)\|^2 \left( \frac{\sqrt{2} \delta_k}{\mu}-\frac{1}{L} \right) \\ \nonumber
    && +\alpha_k \|\nabla\phi(\bx_k)\| \frac{\sqrt{2\,\bar c_1} \delta_k}{\mu}\\\nonumber
    && + \frac{1}{2} \alpha_k^2 L \mbE\left(\frac{(\delta_{max}+1)^2}{\mu^2}\|\bg_k\|^2| \mF_k\right)\\\nonumber
    &\leq&
    \phi(\bx_k)  + \alpha_k \|\nabla\phi(\bx_k)\|^2\left(\frac{\sqrt{2} \delta_k}{\mu}-\frac{1}{L}\right)\\\nonumber
    && + \alpha_k \|\nabla\phi(\bx_k)\| \frac{\sqrt{2\,\bar c_1} \delta_k}{\mu}\\\nonumber
    && +\frac{1}{2} \alpha_k^2 L \frac{(\delta_{max}+1)^2}{\mu^2}2(\|\nabla \phi(\bx_k)\|^2+\bar c_1). 
\end{eqnarray}
Rearranging this inequality we obtain
\begin{equation} \label{pr26}
    \mbE(\phi(\bx_{k+1})  | \mF_k) \leq \phi(\bx_k)
    +\|\nabla\phi(\bx_k)\|^2 \alpha_k (c_k-\frac{1}{L})
    +\alpha_k^2 \bar c_2+\|\nabla\phi(\bx_k)\| \alpha_k \delta_k \bar c_3,
\end{equation}
where $$c_k=\frac{\sqrt{2} \delta_k}{\mu}+\frac{\alpha_k L (\delta_{max}+1)^2}{\mu^2}, \; \bar c_2=\frac{\bar c_1 L (\delta_{max}+1)^2}{\mu^2}, \; \bar c_3=\frac{\sqrt{2\,\bar c_1}}{\mu}.$$
\noindent
By 
Assumption~\ref{ass:sequences}, we have that $c_k \to 0$ and thus there exists $\tilde{k} \geq \bar{k}$ such that for all $k \geq \tilde{k}$ we have $c_k-1/L \leq -1/(2L)$. Henceforth, we assume $k \geq \tilde{k}$.  From~\eqref{pr26} we obtain 
\begin{equation} \label{pr27}
    \mbE(\phi(\bx_{k+1})  | \mF_k) \leq \phi(\bx_k)
    -\frac{1}{2L} \|\nabla\phi(\bx_k)\|^2 \alpha_k 
    +\alpha_k^2 \bar c_2+\|\nabla\phi(\bx_k)\| \alpha_k \delta_k \bar c_3,
\end{equation}
Let us consider two cases: i) $\|\nabla\phi(\bx_k)\|\leq 1$, and ii) $\|\nabla\phi(\bx_k)\|>1$.
If  $\|\nabla\phi(\bx_k)\|\leq 1$, then 
$$\|\nabla\phi(\bx_k)\| \alpha_k \delta_k \bar c_3\leq \alpha_k \delta_k \bar c_3.$$
If $\|\nabla\phi(\bx_k)\|>1$, then $\|\nabla\phi(\bx_k)\|\leq \|\nabla\phi(\bx_k)\|^2$ and 
$$\|\nabla\phi(\bx_k)\| \alpha_k \delta_k \bar c_3\leq \|\nabla\phi(\bx_k)\|^2\alpha_k \delta_k \bar c_3.$$
Thus, we can conclude that the following inequality holds in general: 
$$\|\nabla\phi(\bx_k)\| \alpha_k \delta_k \bar c_3\leq\alpha_k \delta_k \bar c_3+\|\nabla\phi(\bx_k)\|^2\alpha_k \delta_k \bar c_3,$$
and putting it in~\eqref{pr27} we get 
\begin{equation} \label{pr28}
    \mbE(\phi(\bx_{k+1})  | \mF_k) \leq \phi(\bx_k)
    -\alpha_k \|\nabla\phi(\bx_k)\|^2 \left( \frac{1}{2L} -v_k \right)
    +\alpha_k^2 \bar c_2+\alpha_k \delta_k \bar c_3,
\end{equation}
where $v_k=\alpha_k \delta_k \bar c_3$. Using Assumption~\ref{ass:sequences} we have that $v_k \to 0$ and thus there exists $\hat{k} \geq \tilde{k}$ such that for all $k \geq \hat{k}$ we have $v_k-1/(2L) \leq -1/(4L)$. Thus, for all $k \geq \hat{k}$ we have 
\begin{equation} \label{pr29}
    \mbE(\phi(\bx_{k+1}) -\phi^* | \mF_k) \leq \phi(\bx_k)-\phi^* 
    -\alpha_k \frac{1}{4L}\|\nabla\phi(\bx_k)\|^2 
    +\alpha_k^2 \bar c_2+\alpha_k \delta_k \bar c_3,
\end{equation}  
where $\phi^*$ is the lower bound of $\phi$ in Assumption~\ref{ass:phi_properties}. Applying Theorem~\ref{thm:rand_variable_conv} with $ U_k=\phi(\bx_k)-\phi^*, \beta_k=0, \xi_k=\alpha_k^2 \bar c_2+\alpha_k \delta_k \bar c_3$ and $\rho_k=\alpha_k \frac{1}{4L}\|\nabla\phi(\bx_k)\|^2$, we conclude that $\phi(\bx_k)$ converges a.s. to a value in $[\phi^*,\,+\infty)$ and 
$$ \sum_{k=\hat{k}}^{\infty} \alpha_k \|\nabla\phi(\bx_k)\|^2 < \infty \quad \mbox{a.s.} $$
Since $\sum_{k=0}^{\infty} \alpha_k = \infty$, the statement~\eqref{pr30} holds.
\end{proof}
    
{
\begin{theorem} \label{th:main2}
Let Assumptions~\ref{ass:phi_properties}-\ref{ass:sequences} hold, $ \{\bx_k\} $ be a sequence generated by Algorithm~\ref{alg:LSOS} and $ K_f\leq K_{max} $.
Then
\begin{equation} \label{pr31}
    \lim_{k \to \infty}\|\nabla\phi(\bx_k)\|=0 \quad \mbox{a.s.}
\end{equation}
and each limit point of $ \{\bx_k\} $ is stationary a.s.~for problem~\eqref{eqn:finite_sum}.
\end{theorem}
}
\begin{proof}
Given that  $K_f \leq K_{max}$, the SA step length is never used. 
Then there exists $\bar{k}$ such that for all $k \geq \bar{k}$ we have
$$ f_{\mD_k}(\bx_{k+1}) \leq   f_{\mD_k}(\bx_{k}) - c_{\min} \|\bg_{\mD_k}(\bx_{k})\|^2 + C_{\max} \, \zeta_k . $$
Let us denote $\mF_{k+1/2}$ the $\sigma$-algebra identified by $\mN_0,\mD_0, \ldots,\mN_{k-1}, \mD_{k-1}, \mN_k$. Then
$$ \mbE(f_{\mD_k}(\bx_{k+1}) | \mF_{k+1/2})  \leq   \mbE(f_{\mD_k}(\bx_{k}) | \mF_{k+1/2})  - c_{\min}\mbE( \|\bg_{\mD_k}(\bx_{k})\|^2 | \mF_{k+1/2}) + C_{\max} \, \zeta_k . $$
Furthermore,
\begin{equation} \label{rem1}
    \mbE(f_{\mD_k}(\bx_{k+1}) | \mF_{k+1/2})=\phi(\bx_{k+1}), \quad \mbE(f_{\mD_k}(\bx_{k}) | \mF_{k+1/2})=\phi(\bx_{k}), 
\end{equation}
and 
\begin{equation} \label{rem2}
    \mbE( \bg_{\mD_k}(\bx_{k}) | \mF_{k+1/2}) = \nabla \phi(\bx_{k}). 
\end{equation}
The latter equality implies 
\begin{align*}
    \|\nabla \phi(\bx_{k})\|^2 & = \|\mbE( \bg_{\mD_k}(\bx_{k}) | \mF_{k+1/2})\|^2\leq \mbE^2( \|\bg_{\mD_k}(\bx_{k}) \| | \mF_{k+1/2}) \\
    & \leq \mbE( \|\bg_{\mD_k}(\bx_{k}) \|^2 | \mF_{k+1/2}),
\end{align*}
and hence
$$ -c_{\min}\mbE( \|\bg_{\mD_k}(\bx_{k})\|^2 | \mF_{k+1/2})\leq -c_{\min} \|\nabla \phi(\bx_{k})\|^2 . $$
We can conclude that 
$$ \phi(\bx_{k+1})  \leq   \phi(\bx_{k}) - c_{\min}\|\nabla \phi(\bx_{k})\|^2 + C_{\max} \, \zeta_k, $$
and thus, for any $p \in \mathbb{N}$,
$$ \phi(\bx_{k+p})
   \leq \phi(\bx_{k})-c_{min}\,\sum_{j=0}^{p-1} \|\nabla \phi(\bx_{k+j})\|^2+C_{max} \sum_{j=0}^{p-1} \zeta_{k+j}. $$
{Notice that in this case, when $K_f \leq K_{max}$ for all $k=0,1,...$, we have either
$$ f_{\mD_k}(\bx_{k+1}) \leq   f_{\mD_k}(\bx_{k}) - c_{\min} \|\bg_{\mD_k}(\bx_{k})\|^2 + C_{\max} \, \zeta_k \leq f_{\mD_k}(\bx_{k}) + C_{\max} \, \zeta_k $$
or $\bx_{k+1}=\bx_{k}$, and thus for all $k=0,1,...$ there holds
$$f_{\mD_k}(\bx_{k+1}) =   f_{\mD_k}(\bx_{k}) \leq f_{\mD_k}(\bx_{k}) + C_{\max} \, \zeta_k.$$
Applying the conditional expectation $\mbE(\cdot | \mF_{k+1/2})$ we obtain that the following inequality holds for all $k=0,1,...$:
$$ \phi(\bx_{k+1})  \leq   \phi(\bx_{k}) + C_{\max} \, \zeta_k$$
and thus the summability of $\zeta_k$ implies that for all $k=0,1,...$ we have 
$$ \phi(\bx_{k+1})  \leq   \phi(\bx_{0}) + C_{\max} \, \sum_{j=0}^{k} \zeta_j \leq  \phi(\bx_{0}) + C_{\max} \, \sum_{j=0}^{\infty} \zeta_j :=\bar{c}_4 < \infty $$
i.e.,  $\phi(\bx_{k})$ is uniformly bounded from above with a constant $\bar{c}_4$ that depends on $C_{max}$, $\zeta_k$ and $\bx_0$. Therefore, we conclude that for any $p \in \mathbb{N}$,
$$ \phi(\bx_{\bar{k}+p})
    \leq \bar{c}_4-c_{min}\,\sum_{j=0}^{p-1} \|\nabla \phi(\bx_{\bar{k}+j})\|^2+C_{max} \sum_{j=0}^{p-1} \zeta_{\bar{k}+j}.
$$}
\noindent
Since $\phi$ is bounded from below, by taking the expectation, letting $p$ tend to $\infty$ and using the summability of $\zeta_k$, we get
$$ \sum_{k=0}^{\infty} \mbE(\|\nabla \phi(\bx_{k})\|^2)< \infty . $$
Finally, by Markov's inequality we have that for any $ \epsilon>0 $
$$ P(\|\nabla \phi(\bx_{k})\| \geq \epsilon)\leq \frac{\mbE(\|\nabla \phi(\bx_{k})\|^2)}{\epsilon^2} $$
and therefore
$$ \sum_{k=0}^{\infty} P(\|\nabla \phi(\bx_{k})\| \geq \epsilon)< \infty. $$
Finally, Borel-Cantelli Lemma~\cite[Theorem~2.7]{klenke:2014} implies that $ \lim_{k \to \infty} \nabla \phi(\bx_{k})=0 $ a.s., and by the continuity of $\nabla \phi$ we conclude that every limit point of $\{\bx_k\}$ is stationary for $\phi$ a.s..
\end{proof}

{\noindent
The overall convergence statement is a simple combination of the previous two theorems as stated below.

\begin{theorem} \label{th:main}
Let Assumptions~\ref{ass:phi_properties}-\ref{ass:sequences} hold, $ \{\bx_k\} $ be a sequence generated by Algorithm~\ref{alg:LSOS}. 
Then
\begin{equation*} 
    \liminf_{k \to \infty}\|\nabla\phi(\bx_k)\|=0 \quad \mbox{a.s.} \, . 
\end{equation*}
Furthermore, if $ K_f\leq K_{max} $ then 
\begin{equation*} 
    \lim_{k \to \infty}\|\nabla\phi(\bx_k)\|=0 \quad \mbox{a.s.}
\end{equation*}
and each limit point of $ \{\bx_k\} $ is stationary a.s.~for problem~\eqref{eqn:finite_sum}.
\end{theorem}}

\smallskip  
If $ \phi(x) $ is $\mu$-strongly convex we have a stronger convergence result. In this case problem~\eqref{eqn:finite_sum} has a unique solution $\bx^*$.

\begin{theorem} \label{th:convex}
Let Assumptions~\ref{ass:phi_properties}-\ref{ass:sequences} hold and $ \{\bx_k\} $ be a sequence generated by Algorithm~\ref{alg:LSOS}. 
If $ \phi $ is $\mu$-strongly convex, then the sequence $ \{\bx_k\} $ converges a.s. to the unique solution $ \bx^* $ of problem~\eqref{eqn:finite_sum}. 
\end{theorem}
\begin{proof}
First, notice that Assumption~\ref{ass:phi_properties} and strong convexity imply 
\begin{equation} \label{eqn:gradient_error_bound}
    \frac{\mu}{2} \|\bx_k-\bx_*\|^2 \leq \phi(\bx_k) -\phi(\bx_*) \leq \frac{L}{2}\|\nabla \phi(\bx_k)\|^2,
\end{equation}
where $ L $ is as in Assumption~\ref{ass:phi_properties}.
If $K_f \leq K_{max}$, then Theorem~\ref{th:main2} implies 
$$ \lim_{k \to \infty} \bx_k=\bx_* \quad \mbox{a.s.}. $$
On the other hand, if the SA step length is eventually chosen, from {Theorem~\ref{th:main1}} we know that there exists a subsequence $ \{ \bx_k \}_{k \in K \subseteq \mathbb{N}} $ such that 
$$ \lim_{k \in K} \|\nabla \phi(\bx_k)\| = 0 \quad \mbox{a.s.}. $$
This, together with \eqref{eqn:gradient_error_bound}, implies 
$$ \lim_{k \in K} \bx_k=\bx_* \quad \mbox{a.s.}. $$
By the continuity of $\phi$ we get
$$ \lim_{k \in K} \phi(\bx_k)=\phi(\bx_*) \quad \mbox{a.s.}, $$
and according to Theorem~\ref{thm:rand_variable_conv} the whole sequence $ \{ \phi(\bx_k) \} $ converges a.s.. Thus 
$$ \lim_{k \to \infty} \phi(\bx_k)=\lim_{k \in K} \phi(\bx_k)=\phi(\bx_*) \;\; \mbox{a.s.} , $$
which, together with \eqref{eqn:gradient_error_bound}, implies the thesis.
\end{proof}

\begin{remark} \label{rem:infinite_sample}
It is worth noting that the results proved in Theorems~\ref{th:main} and~\ref{th:convex} can be extended to the more general case given by
\begin{equation} \label{eqn:expectation_func}
    \phi(\bx) = \mbE(\psi(\bx;\bxi)),
\end{equation}
where $\bxi\in\Re^m$ is a random vector defined on a probability space. This formulation is usually considered when dealing with infinite samples. 
In this case, one can approximate the objective function and its derivatives with sample means of the form~\eqref{subsample}, in which $\phi_i(\bx) = \psi(\bx;\bxi_i)$, where $\bxi_i$ is a realization of the random vector $\bxi$. In this setting, the same convergence results hold provided that~\eqref{rem1}-\eqref{rem2} hold and all the functions $\phi_i$ are bounded from below.
\end{remark}

\section{An L-BFGS version of LSOS\label{sec:lbfgs-saga}} 

Subsampling is a natural way of generating approximations of the objective function and its derivatives in the case of finite-sum problems. According to Algorithm~\ref{alg:LSOS}, we have that $ f_{\mN_k}(\bx_k) $, $ f_{\mD_k}(\bx_{k+1}) $ and $ \bg_{\mD_k}(\bx_{k+1}) $ are unbiased estimators of $\phi(\bx_k)$, $ \phi(\bx_{k+1}) $ and $\nabla \phi(\bx_{k+1}) $, respectively.
The derivative estimate corresponding to the sample $ \mN_k $ can be replaced by a more sophisticated one, with the aim, e.g., of improving the performance of the method. The Hessian approximation only needs to have eigenvalues that are uniformly bounded from above and away from zero in order to prove the results presented in the previous section. Therefore, our convergence theory still holds if one replaces the subsampled Hessian approximation with suitable quasi-Newton approximations.

In the case of strongly convex problems, Byrd et al.~\cite{byrd:2016} propose to use subsampled gradients and an approximation of the inverse of the Hessian $\nabla^2 \phi(\bx)$, say $ H_k $, built by means of a stochastic variant of the limited-memory BFGS (L-BFGS) method. Given a memory parameter $m$, $ H_k $ is defined by applying $m$ BFGS updates to an initial matrix, using the $m$ most recent correction pairs $ (\bs_j, \by_j) \in \Re^n\times\Re^n $, like in the deterministic version of the L-BFGS method. The pairs are obtained by averaging iterates, i.e., every $l$ steps the following vectors are computed
\begin{equation}\label{eqn:BHNS_update_1}
    \bw_j = \frac{1}{l}\sum_{i = k-l+1}^{k} \bx_i , \quad \bw_{j-1} = \frac{1}{l}\sum_{i = k-2l+1}^{k-l} \bx_i,
\end{equation}
where $j = k/l$, and they are used to build $\bs_j$ and $\by_j$ as specified next:
\begin{equation}\label{eqn:BHNS_update_2}
    \bs_j = \bw_j - \bw_{j-1}, \quad \by_j = B_{\mT_j}(\bw_j) \,\bs_j,
\end{equation}
where $ \mT_j \subset \mN $. 
By defining the set of the $m$ most recent correction pairs as
$$ \left\lbrace (\bs_j, \by_j), \; j = 1,\ldots,m \right\rbrace, $$
the inverse Hessian approximation is computed as
\begin{equation} \label{eqn:BHNS_inverse_hessian}
    H_k  =  H_k^{(m)},
\end{equation}
where for $ j = 1,\ldots,m $
\begin{equation} \label{eqn:BHNS_inverse_hessian_2}
    H_k^{(j)} = \left(I- \frac{\bs_j\, \by_j^\top}{\bs_j^\top\by_j} \right)^\top H_k^{(j-1)}
    \left(I- \frac{\by_j\, \bs_j^\top}{\bs_j^\top\by_j} \right) + \frac{\bs_j\, \bs_j^\top}{ \bs_j^\top \by_j},
\end{equation}
and $ H_k^{(0)} = ( \bs_m^\top\by_m / \|\by_m\|^2 ) \, I $. It can be proved (see \cite[Lemma~3.1]{byrd:2016} and \cite[Lemma~4]{moritz:2016}) that for approximate inverse Hessians of the form \eqref{eqn:BHNS_inverse_hessian} there exist constants $\lambda_1$ and $\lambda_2$, with $\lambda_2 \geq \lambda_1 > 0$, such that
\begin{equation} \label{eqn:BHNS_bounds}
    \lambda_1 I \preceq H_k \preceq \lambda_2 I,
\end{equation}
and hence Assumption~\ref{ass:B(xk)} holds with $\mu = 1 / \lambda_2 $ and $L = 1 / \lambda_1 $.
The authors of~\cite{byrd:2016} propose a stochastic second-order method for convex problems in which the direction is computed as
$$ \bd_k = - H_k\,\bg_{\mN_k}(\bx_k), $$
and prove R-linear decrease of the expected value of the error in the function value. 
For the subsampled gradient and the approximation of the Hessian described above we can easily prove that Lemma~\ref{le:independence} holds.

In the nonconvex case there is no guarantee that the vectors $\bs_j$ and $\by_j$ defined in \eqref{eqn:BHNS_update_2} satisfy the condition $\bs_j^\top\by_j>0$, which is needed to preserve the positive definiteness of the inverse Hessian approximation $H_k$. Therefore, we propose to modify the update scheme \eqref{eqn:BHNS_update_1}-\eqref{eqn:BHNS_update_2} by introducing a damping strategy inspired by the work in \cite{wang:2017siopt}. Let
\begin{equation}\label{eqn:damped_update_1}
    \gamma_{j-1} = \max\left\lbrace \frac{\by_{j-1}^\top\by_{j-1}}{\bs_{j-1}^\top\by_{j-1}},\,\delta\right\rbrace\geq\delta,
\end{equation}
with $\delta>0$. We replace $\by_j$ with the vector
\begin{equation}\label{eqn:damped_update_2}
    \bar{\by}_j = \nu_j \by_j + \left(1-\nu_j\right)\gamma_{j-1}\bs_j,
\end{equation}
where the weight $\nu_j$ is determined as follows:
\begin{equation}\label{eqn:damped_update_3}
    \nu_j = \left\lbrace \begin{array}{ll}
        \frac{0.75\,\gamma_{j-1}\,\bs_{j}^\top\bs_j}{\gamma_{j-1}\,\bs_{j}^\top\bs_j - \bs_{j}^\top\by_j}, & \mbox{if } \bs_{j}^\top\by_j< 0.25\,\gamma_{j-1}\,\bs_{j}^\top\bs_j,\\
        1,                                                                                                           & \mbox{otherwise.}
    \end{array} \right.
\end{equation}
Note that Lemma~3.2 and Lemma~3.3 in \cite{wang:2017siopt} guarantee that the L-BFGS matrices defined by \eqref{eqn:BHNS_inverse_hessian_2} and \eqref{eqn:damped_update_1}-\eqref{eqn:damped_update_3} satisfy~\eqref{eqn:BHNS_bounds}, and hence Assumption~\ref{ass:B(xk)}.

Notice that we can replace the subsampled gradient estimate with alternative gradient estimates coming, e.g., from variance-reduction techniques, which have gained much attention in the literature. This is the case of the stochastic L-BFGS algorithm by Moritz et al.~\cite{moritz:2016}, the stochastic block L-BFGS by Gower et al.~\cite{gower:2016}, and the stochastic damped L-BFGS method by Wang et al.~\cite{wang:2017siopt}, where SVRG gradient approximations are used. The first two methods are suited for strongly convex optimization problems. The method in \cite{moritz:2016} computes the same inverse Hessian approximation as in~\cite{byrd:2016}, while the method in \cite{gower:2016} uses an adaptive sketching technique exploiting the action of a subsampled Hessian on a set of random vectors rather than just on a single vector. Both stochastic BFGS algorithms use constant step lengths and have Q-linear rate of convergence of the expected value of the error in the objective function, but the block L-BFGS one appears more efficient than the other in most of the numerical experiments reported in~\cite{gower:2016}. The method in \cite{wang:2017siopt}, designed for nonconvex problems, uses damped L-BFGS updates, which are computed at each iteration by using a difference of gradients (the gradient sample at the previous iteration is used to ensure independence). Also in this case a constant step lenght is considered, and the authors prove that the expected value of the gradient norm (computed among all the iterations) is led below a predefined threshold in a finite number of steps.

Instead of the SVRG approximation, we apply a mini-batch variant of the SAGA algorithm~\cite{defazio:2014saga}, used in~\cite{gower:2020}. Starting from the matrix $ J^0 \in \Re^{n \times N}$ whose columns are defined as $ J_0^{(i)} = \nabla \phi_i(\bx^0)$, at each iteration we compute the gradient approximation as
\begin{equation}\label{eqn:SAGA_1}
    \bg^{\mathrm{SAGA}}_{\mN_k}(\bx_k)=\frac{1}{N_k} \sum_{i\in\mN_k} \left(\nabla \phi_i(\bx_k) - J_k^{(i)}\right) + \frac{1}{N}\sum_{l=1}^{N} J_k^{(l)},
\end{equation}
and, after updating the iterate, we set
\begin{equation}\label{eqn:SAGA_2}
    J_{k+1}^{(i)} =
    \left\{ \begin{array}{cl}
        J_{k}^{(i)}                      & \mbox{if } i \notin \mN_k,\\
        \nabla \phi_i(\bx_{k+1}) & \mbox{if } i \in \mN_k.
    \end{array} \right.
\end{equation}
As in SVRG, the set $ \mN $ is partitioned into a fixed-number $n_b$ of random mini-batches which are used in order. However, SAGA only requires a full gradient computation at the beginning of the algorithm, while SVRG requires a full gradient evaluation every $n_b$ iterations. Note that the SAGA gradient estimate satisfies the first part of Assumption~\ref{ass:gradient} (gradient unbiasedness). Furthermore, by \cite[Lemma~13]{horvath:2019}, it also satisfies the second part of Assumption~\ref{ass:gradient} (variance boundedness) if the iterates are bounded.

The resulting method, named LSOS-BFGS, is reported in Algorithm~\ref{alg:LSOS-BFGS}. Note that the first L-BFGS update pair is available after the first $2 l$ iterations, and following~\cite{byrd:2016} we take $\bd_k = -\bg(\bx_k)$ for the first $2 l$ iterations.

\begin{algorithm}[htbp]
    \caption{LSOS-BFGS\label{alg:LSOS-BFGS}}
    {\small
        \begin{algorithmic}[1]
            \STATE given $ \bx^0 \in \Re^n $, $ \eta, \vartheta \in (0,1) $,
            $ \{\alpha_k\} \subset \Re_{++} $, $ c_{\min}, C_{\max} \in \Re_{+} $, $ K_{\max}  \in \mathbb{N} $, $ n_b, l, m \in \mathbb{N} $
            \STATE $ K_f = 0 $, $ ind = 0 $, $ k = 0 $
            \WHILE {stop criterion not satisfied}
            \STATE compute a random and uniform partition $ \{\mK_0,\mK_1,\ldots,\mK_{n_b-1}\} $ of $ \mN $
            \FOR{$ r = 0, \ldots, n_b-1 $}
            \STATE choose $ \mN_k = \mK_r $ and compute $\bg_k=\bg^{\mathrm{SAGA}}_{\mN_k}(\bx_k) $ as in~\eqref{eqn:SAGA_1}-\eqref{eqn:SAGA_2}
            \IF {$ k < 2 l $}
            \STATE $ \bd_k = - \bg_k $
            \ELSE
            \STATE $ \bd_k = - H_k\,\bg_k $ with $H_k$ defined in \eqref{eqn:BHNS_inverse_hessian}-\eqref{eqn:BHNS_inverse_hessian_2}
            \ENDIF
            \IF { $ ind = 0 $}
            \STATE find the smallest integer $j \ge 0$ such that the step length $ t_k=\beta^j $ satisfies 
            $$
            f_{\mN_k}(\bx_k + t_k \bd_k) \leq f_{\mN_k}(\bx_k) + \eta\, t_k\, \bg_k^\top \bd_k + \vartheta^k
            $$
            \STATE $ \bar{\bx}_{k} = \bx_k + t_k \bd_k $
            \STATE choose $ \mD_k \subset \mN $ randomly and uniformly 
            \IF {$ \displaystyle  f_{\mD_k}(\bar \bx_k) \le  f_{\mD_k}(\bx_{k}) - c_{\min} \|\bg_{\mD_k}(\bx_{k})\|^2 + C_{\max} \, \zeta_k $} 
            \STATE {$ \bx_{k+1} = \bar \bx_k $} 
            \ELSE
            \STATE {$ \bx_{k+1} = \bx_k $} 
            \STATE $ K_f = K_f + 1$ 
            \IF {$ K_f > K_{\max} $}
            \STATE $ ind = 1$
            \ENDIF
            \ENDIF
            \ELSE
            \STATE {$ t_k = \alpha_k $}
            \STATE {$ \bx_{k+1} = \bx_k + t_k \bd_k $}

            \ENDIF
            \STATE {$ k = k + 1 $}
            \IF{$ \!\!\!\! \mod(k,l)=0 \mbox{ and } k \geq 2 l $}
            \STATE update the L-BFGS correction pairs by using \eqref{eqn:BHNS_update_1}, and \eqref{eqn:BHNS_update_2} or \eqref{eqn:damped_update_1}-\eqref{eqn:damped_update_3}
            \ENDIF
            \ENDFOR
            \ENDWHILE
        \end{algorithmic}
    }
\end{algorithm}

\begin{remark}
    By assuming that all the functions $\phi_i$ have Lipschitz-continuous gradients with Lipschitz constant bounded from above by $L$, we have that the gradient estimate
    $\bg^{\mathrm{SAGA}}_{\mN_k}(\bx)$ is Lipschitz continuous with Lipschitz constant bounded from above by $L$.
\end{remark}

\section{Numerical experiments\label{sec:experiments}}

We developed a MATLAB implementation of Algorithm~LSOS-BFGS and tested it on both nonconvex and convex finite-sum problems arising in machine learning.

The nonconvex problems we considered are nonlinear least-squares problems {as the ones in \cite{xu:2020}}. Given $N$ pairs $ (\ba_i, b_i) $, where $\ba_i \in \Re^n$ is a data point and $b_i \in \{0,1\}$ the corresponding response, and considering a sigmoidal kernel, one can obtain a problem of the form~\eqref{eqn:finite_sum}, where
$$
\phi_i(\bx)=\frac{1}{2}\left(b_i - \frac{1}{1 + e^{-\ba_i^\top\bx}}\right)^2.
$$
By setting $u_i(\bx)=(1 + e^{-\ba_i^\top\bx})^{-1}$, the gradient and the Hessian of $\phi_i$ can be written as
\begin{eqnarray*}
    & \nabla \phi_i(\bx)=-\left( u_i(\bx) \left(1-u_i(\bx) \right) \left(b_i-u_i(\bx)\right) \right)\ba_i,\\
    & \nabla^2 \phi_i(\bx)= -u_i(\bx) \left( 1-u_i(\bx) \right) \left( b_i- 2 \, (1+b_i)  u_i(\bx) + 3 \, u_i(\bx)^2 \right) \ba_i{\ba^\top_i}.
\end{eqnarray*}

The convex problems come from the training of a linear classifier by minimizing the $\ell_2$-regularized logistic regression model. Given the pairs $ (\ba_i, b_i) $, where $b_i \in \{-1,1\}$ is the class label associated with the training point $\ba_i \in \Re^n$, an unbiased hyperplane approximately separating the two classes can be found by solving problem~\eqref{eqn:finite_sum}, where
$$
\phi_i(\bx)=\log\left(1+ e^{-b_i\,\ba_i^\top\bx}\right) + \frac{\mu}{2}\|\bx\|^2
$$
and $\mu > 0$. By setting $z_i(\bx)=1+e^{-b_i\,\ba_i^{\top}\bx}$, the gradient and the Hessian of $\phi_i$ can be written as
$$
\nabla \phi_i(\bx)=\frac{1-z_i(\bx)}{z_i(\bx)}b_i\,\ba_i+\mu \bx, \qquad \nabla^2 \phi_i(\bx)=\frac{z_i(\bx)-1}{z^2_i(\bx)}\ba_i{\ba^\top_i}+\mu I.
$$
Note that from $(z_i(\bx)-1) / z_i^2(\bx) \in (0,1)$ it follows that $\phi_i$ is $\mu$-strongly convex and
$$
\mu I \preceq \nabla^2 \phi_i(\bx) \preceq L I, \quad L=\mu+\max_{i=1,...,N}\|a_i\|^2.
$$

We observe that in both cases $\nabla \phi_i(\bx)$ and $\nabla^2 \phi_i(\bx)$ can be computed at low cost after $\phi_i(\bx)$ has been computed, because of the special form of the derivatives and the fact that the dominant cost is usually the scalar product in~$u_i(\bx)$ and $z_i(\bx)$.

All the tests were run with MATLAB R2019b on the \emph{magicbox} server operated by the Department of Mathematics and Physics at the University of Campania ``L.~Vanvitelli". It is equipped with 8 Intel Xeon Platinum 8168 CPUs, 1536 GB of RAM and Linux CentOS 7.5 operating system. A single Intel Xeon CPU with 192 GB of RAM was used in the experiments.

\subsection{Results on nonconvex problems}
To assess the performance of LSOS-BFGS on the solution of nonconvex problems, we compared it with the following algorithms:
\begin{itemize}
    \item the variance-reduced stochastic damped L-BFGS algorithm SdLBFGS-VR \cite{wang:2017siopt};
    \item a mini-batch variant of the SAGA algorithm equipped with the same line search used in LSOS-BFGS, referred to as SAGA.
\end{itemize}

We developed our own MATLAB implementation of SdLBFGS-VR, ensuring consistency with LSOS-BFGS in terms of function and gradient evaluation costs. Since the convergence results in the nonconvex case are stated in terms of gradient norm (see Theorem~\ref{th:main} and the results in \cite{wang:2017siopt}), that value was used as a measure of optimality. The constant step length for SdLBFGS-VR was chosen by means of a grid search over the set $S = \{ 1, 5 \cdot 10^{-1}, 10^{-1}, \ldots, 5 \cdot 10^{-4}, 10^{-4} \} $, selecting the step length that yielded the smallest gradient in a fixed execution time. Moreover, we set the L-BFGS memory equal to $10$ and $\delta=10^{-2}$ in the damping strategy. In Algorithms~LSOS-BFGS and SAGA we set $ \vartheta = 0.999$ and the initial line-search step length equal to~$1$. Concerning the L-BFGS update in LSOS-BFGS, the parameters were set as $ m=10 $ and $ l=5 $, and $\delta=10^{-2}$ in~\eqref{eqn:damped_update_1}. We also set, for both LSOS-BFGS and SAGA, $c_{\min}=10^{-6}$, $C_{\max}=10^2$ and $K_{\max} = 10^5$. Finally, {we considered the following LSOS-BFGS specific quantities: $ \mD_k $ with cardinality 1, the predefined step length $\alpha_k = \frac{1}{\|\bd_0\|} \frac{T}{T+k}$, with $T = 10^6$, the gradient sample size equal to $ N_k =  \lceil\sqrt{N} \rceil $ and the Hessian sample size ($|\mT_j|$ in~\eqref{eqn:BHNS_update_2}) equal to $3\lceil\sqrt{N} \rceil$.} 

The comparison was performed by using binary classification datasets from the LIBSVM collection (\url{https://www.csie.ntu.edu.tw/~cjlin/libsvmtools/datasets/}). The results on the four problems listed in Table~\ref{tab:datasets} are representative of the general behavior of LSOS-BFGS.
\begin{table}[t!]
    {\small
        \caption{Datasets from LIBSVM used in the nonconvex numerical experiments and in the comparison of LSOS-BFGS with GGR, MNJ and SAGA. For each dataset the number of training points and the number of features (space dimension) are reported. Whenever a training set was not specified in LIBSVM, we selected it by using the MATLAB \texttt{crossvalind} function so that it contained 70\% of the available data.\label{tab:datasets}}
        \begin{center}
            \begin{tabular}{|l|r|r|}
                \hline
                \multicolumn{1}{|l|}{name} & \multicolumn{1}{r|}{$N$} & \multicolumn{1}{r|}{$n$} \\ \hline
                \textsf{gisette}           &                     6000 &                     5000 \\
                \textsf{rcv1}              &                    20242 &                    47236 \\ 
                \textsf{real-sim}          &                    50617 &                    20958 \\
                \textsf{w8a}               &                    49749 &                      300 \\
                \hline
            \end{tabular}
        \end{center}
    }
\end{table}

The algorithms were stopped when a maximum execution time was reached, i.e., 60 seconds for \textsf{w8a} and \textsf{gisette}, and 300 seconds for \textsf{real-sim} and \textsf{rcv1}. 
Each algorithm was run 20 times on each problem and the average error and average execution time spent until each iteration $k$ were computed. In the plots we represent the mean optimality measures as lines together with shaded regions corresponding to 95\% confidence intervals.

\begin{figure}[h]
    \includegraphics[width=0.47\textwidth]{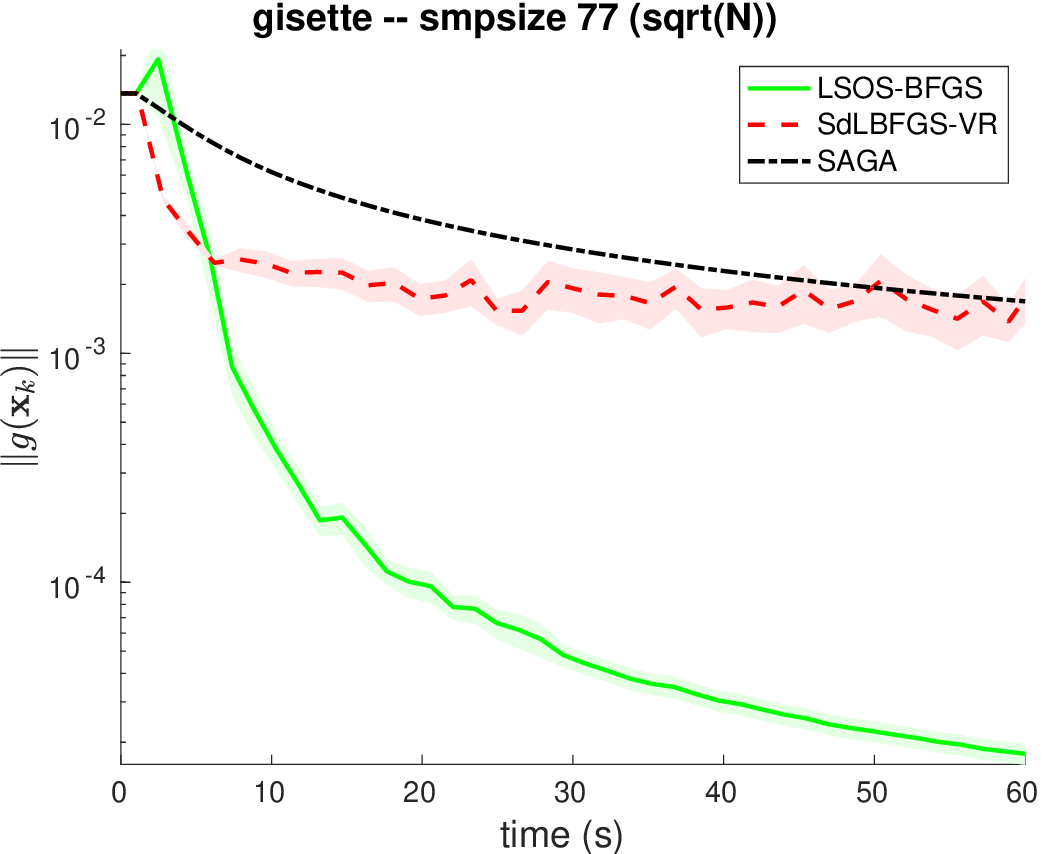}\includegraphics[width=0.47\textwidth]{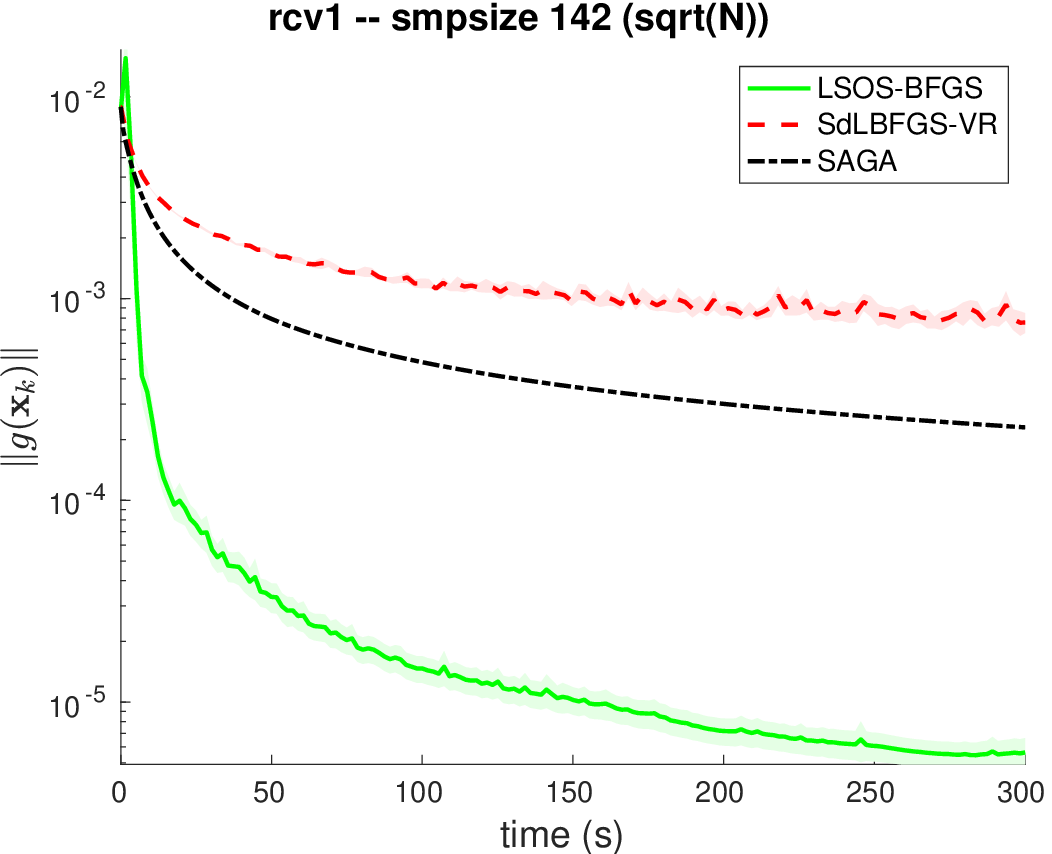} \\
        \includegraphics[width=0.47\textwidth]{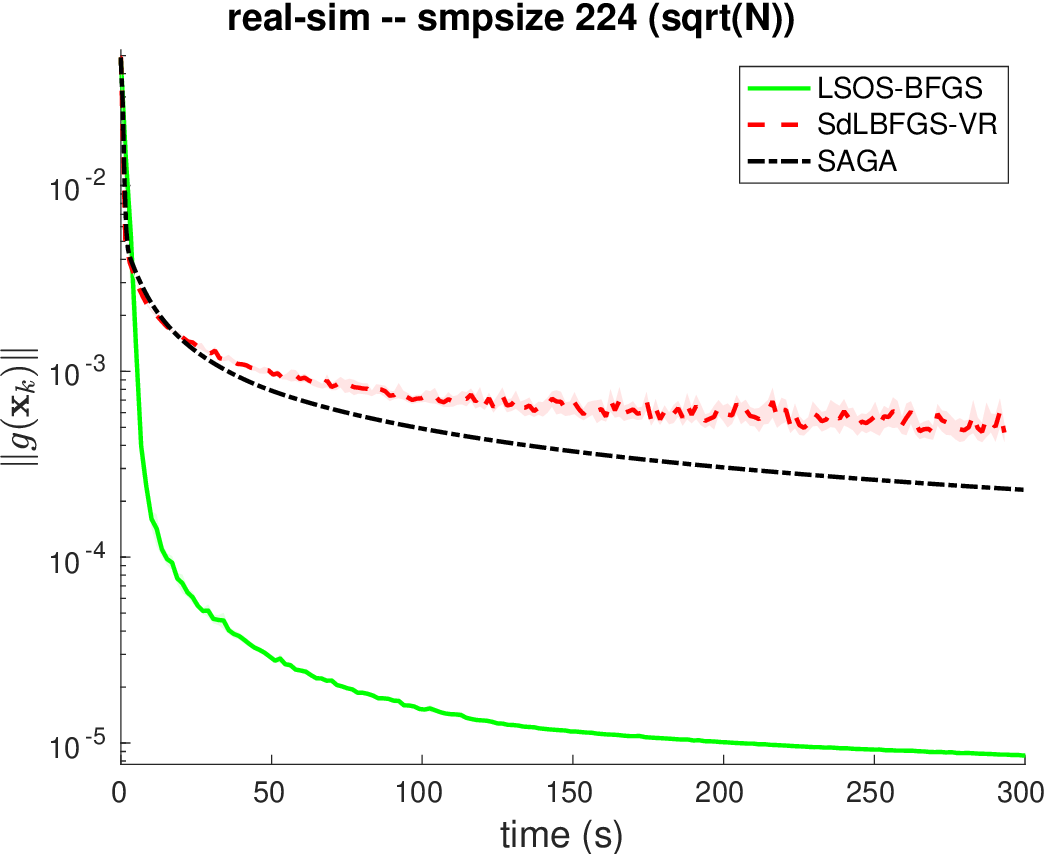}    \includegraphics[width=0.47\textwidth]{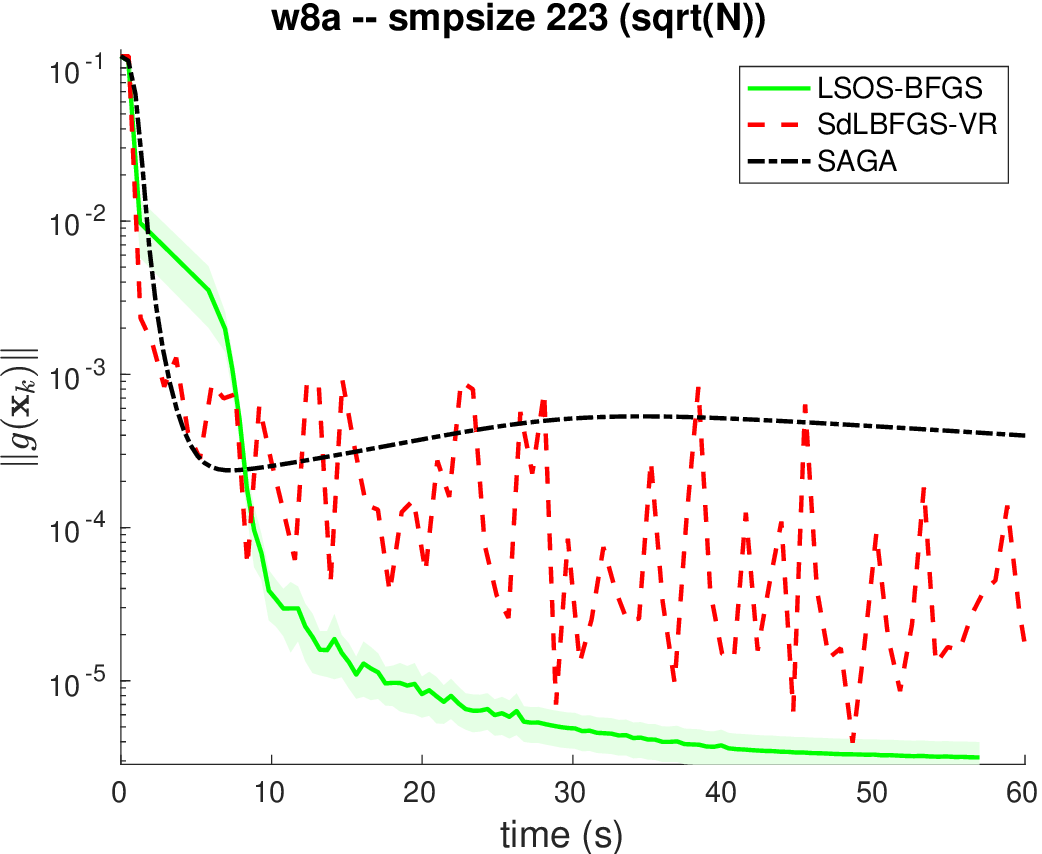}
    \caption{Nonconvex least squares problems: comparison of LSOS-BFGS, SdLBFGS-VR and SAGA in terms of gradient norm versus execution time.\label{fig:ncvx_least_squares}}
\end{figure}

Figure~\ref{fig:ncvx_least_squares} shows a comparison among the three algorithms in terms of the average gradient norm versus the average
execution time. For SdLBFGS-VR, the grid search for the step lengths was performed on the first of the 20 runs and then fixed for the remaining 19 runs. 
The results show that LSOS-BFGS is more effective than the two competitors in reducing the norm of the gradient of the objective function, and hence in converging to stationary points.
It is worth noting that the average number of rejected steps ($K_f$ in Algorithm~LSOS-BFGS) was $0$ for \textsf{gisette}, below 1\% for \textsf{w8a} and \textsf{real-sim}, and around 6\% for \textsf{rcv1}. The maximum number of failures ($K_{\max}$) was never reached, hence LSOS-BFGS never resorted to the use of SA step lengths. 

\subsection{Results on convex problems}
We performed two different sets of experiments for assessing the performance of LSOS-BFGS on the solution of strongly convex problems. In the first one  LSOS-BFGS was compared with the following algorithms:
\begin{itemize}
    \item the stochastic L-BFGS algorithms proposed in~\cite{gower:2016}, referred to as GGR;
    \item the stochastic L-BFGS algorithms proposed in~\cite{moritz:2016}, referred to as MNJ;
    \item the mini-batch variant of the SAGA used in the previous experiments.
\end{itemize}
The implementations of GGR and MNJ were taken from the MATLAB StochBFGS code available from \url{https://perso.telecom-paristech.fr/rgower/software/StochBFGS_dist-0.0.zip} and were used with constant step lengths selected by means of a grid search over the same set $S$ used in the nonconvex case. We chose the step lengths leading to the best results in terms of objective function error versus execution time. On this class of problems we observed  that the performance of LSOS-BFGS improved if the line search was started from a value smaller than~1 (which led to a reduction in the number of line-search steps performed at each iteration). {Thus, we began the line search from a value $ t_{\mathrm{ini}} $ selected by means of a grid search over $ S $, obtaining $ t_{\mathrm{ini}}= 1\cdot10^{-2} $ for \textsf{gisette} and \textsf{w8a}, and $ t_{\mathrm{ini}}=5\cdot10^{-3} $ for \textsf{rcv1} and \textsf{real-sim}. The same strategy was adopted for the line-search version of SAGA, obtaining $ t_{\mathrm{ini}} = 1\cdot10^{-1}$ for \textsf{gisette}, $ t_{\mathrm{ini}} = 1$ for \textsf{rcv1} and \textsf{real-sim}, and $t_{\mathrm{ini}} = 5\cdot10^{-2}$ for \textsf{w8a}. The sample $ {\mathcal D}_k $ at line 15 of LSOS-BFGS algorithm was again of size 1}. Concerning the L-BFGS update, we set $ m=10 $ and $ l=5 $ as in the case of nonconvex problems, and used these values also in the MNJ algorithm. For GGR, following the indications coming from the results in~\cite{gower:2016}, we set $ m=5 $ and used the sketching based on the previous directions (indicated as \texttt{prev} in~\cite{gower:2016}), with sketch size $l =  \left\lceil\sqrt[3]{n}\right\rceil$. 

This comparison was performed by using again the datasets listed in Table~\ref{tab:datasets}. According to the experiments reported in~\cite{gower:2016}, we set the gradient sample size equal to $ \lceil\sqrt{N} \rceil $, the Hessian sample size for LSOS-BFGS and MNJ equal to $3\lceil\sqrt{N} \rceil$, and $ \mu = 1 / N $ as regularization parameter. The same stopping criterion (based on execution time) was used for the nonconvex
problem instances built with the same datasets. For each problem we computed a solution with high accuracy by using
the (deterministic) L-BFGS implementation by Mark Schmidt, available from \url{https://www.cs.ubc.ca/~schmidtm/Software/minFunc.html}.

\begin{figure}[h]
    \includegraphics[width=0.47\textwidth]{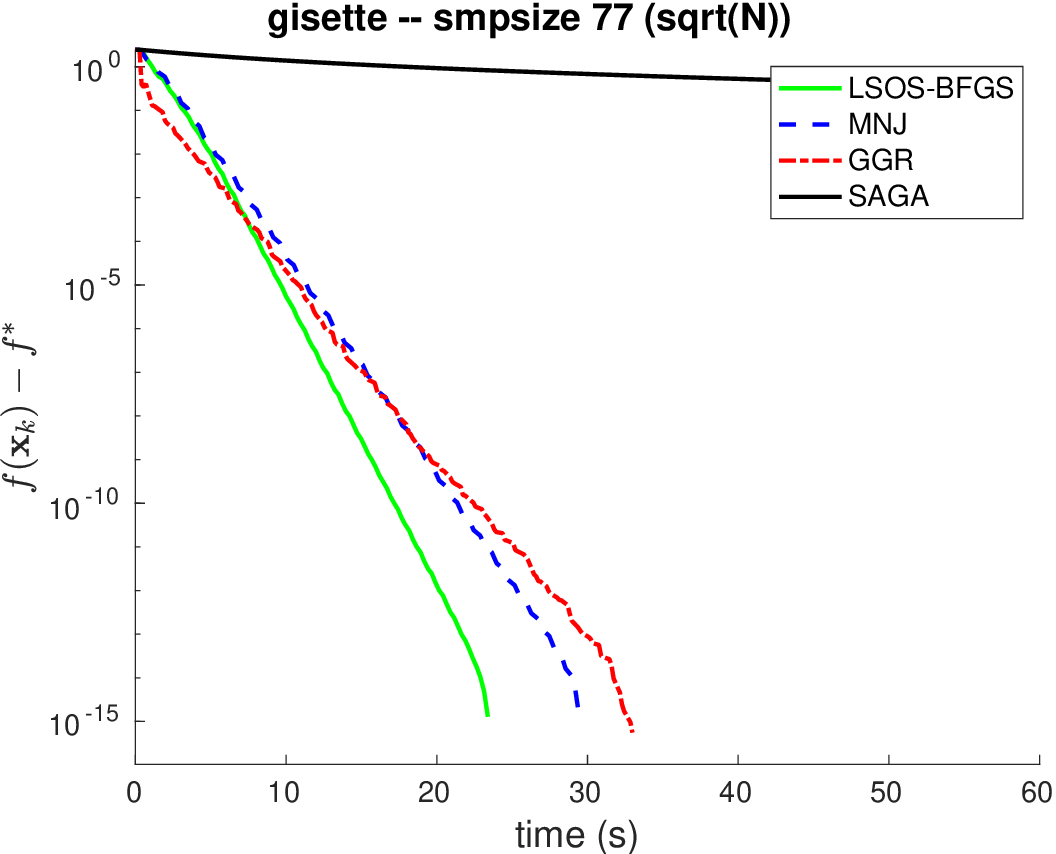}
    \includegraphics[width=0.47\textwidth]{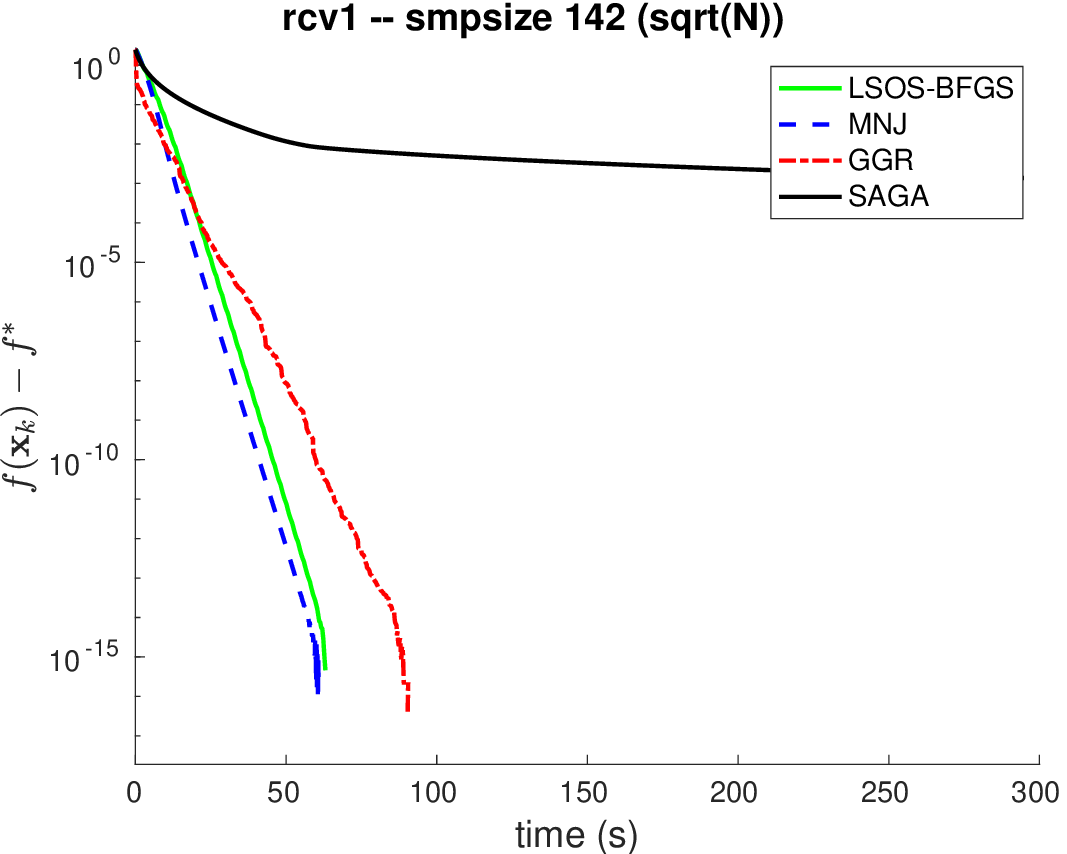}\\[3mm]
    \includegraphics[width=0.47\textwidth]{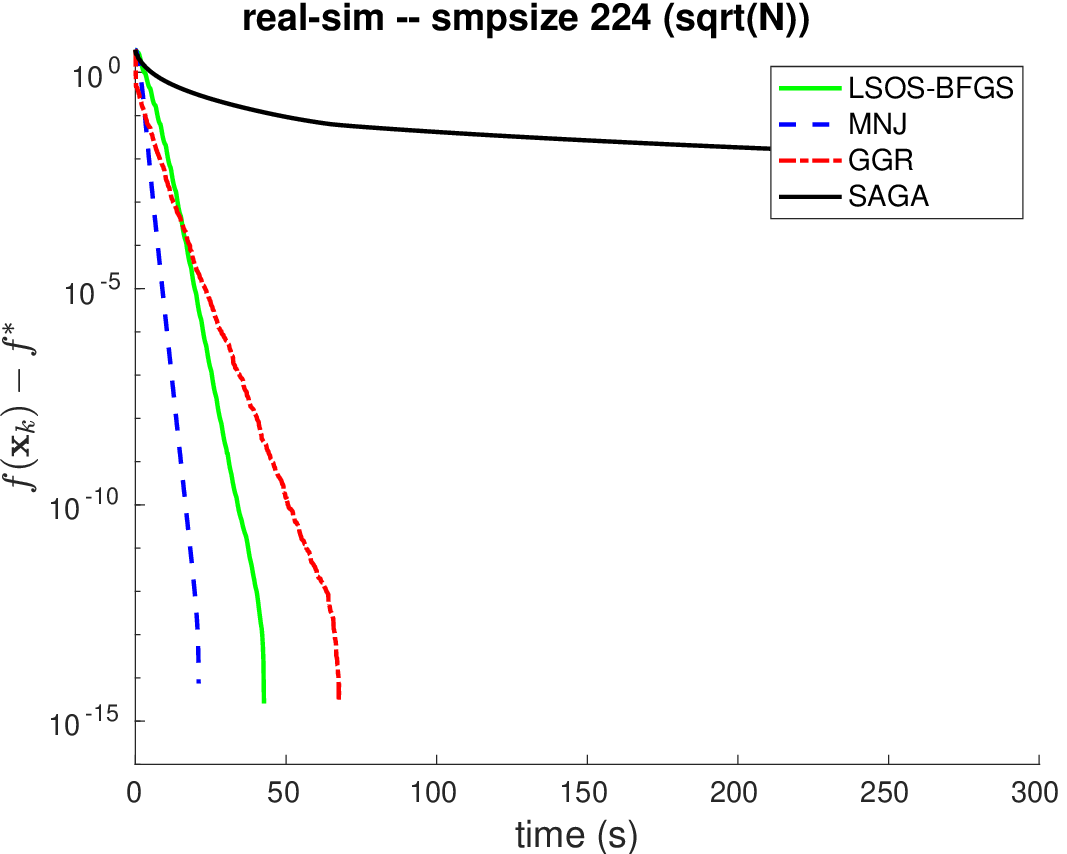}
    \includegraphics[width=0.47\textwidth]{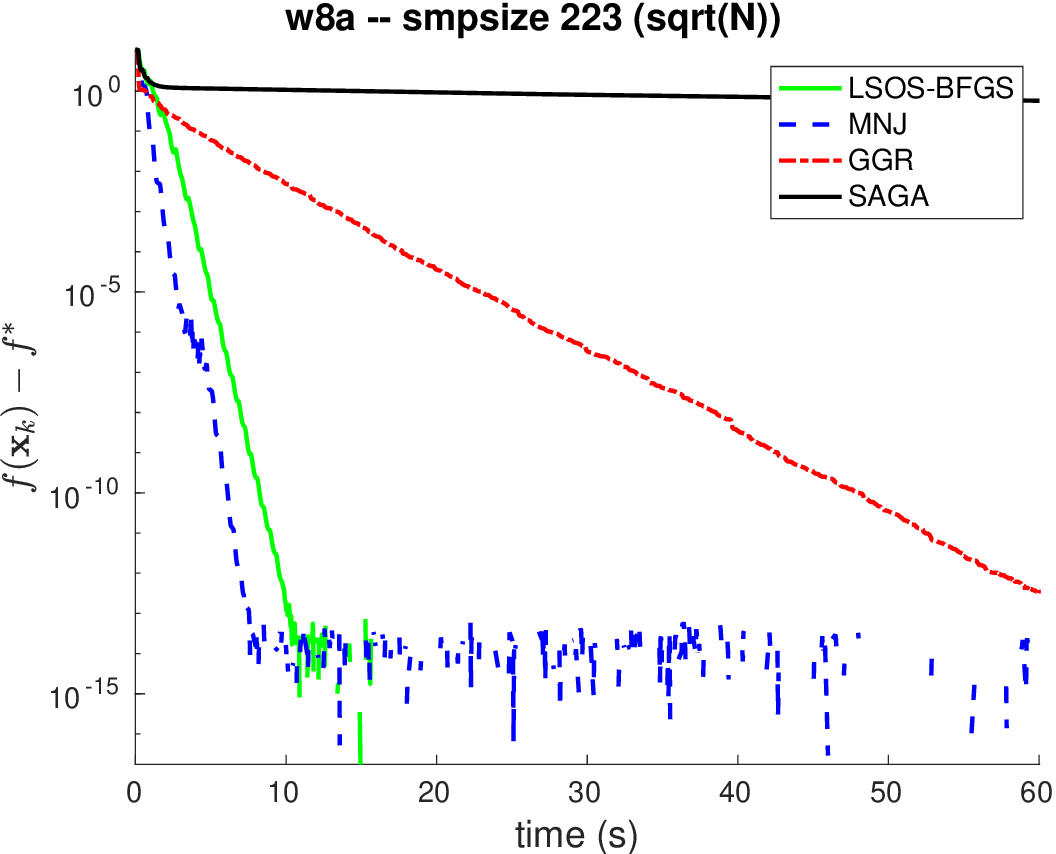}
\caption{Binary classification problems: comparison of LSOS-BFGS, MNJ, GGR and SAGA in terms of function error versus execution time. \label{fig:binary_class}}
\end{figure}

\begin{figure}[h]
    \includegraphics[width=0.47\textwidth]{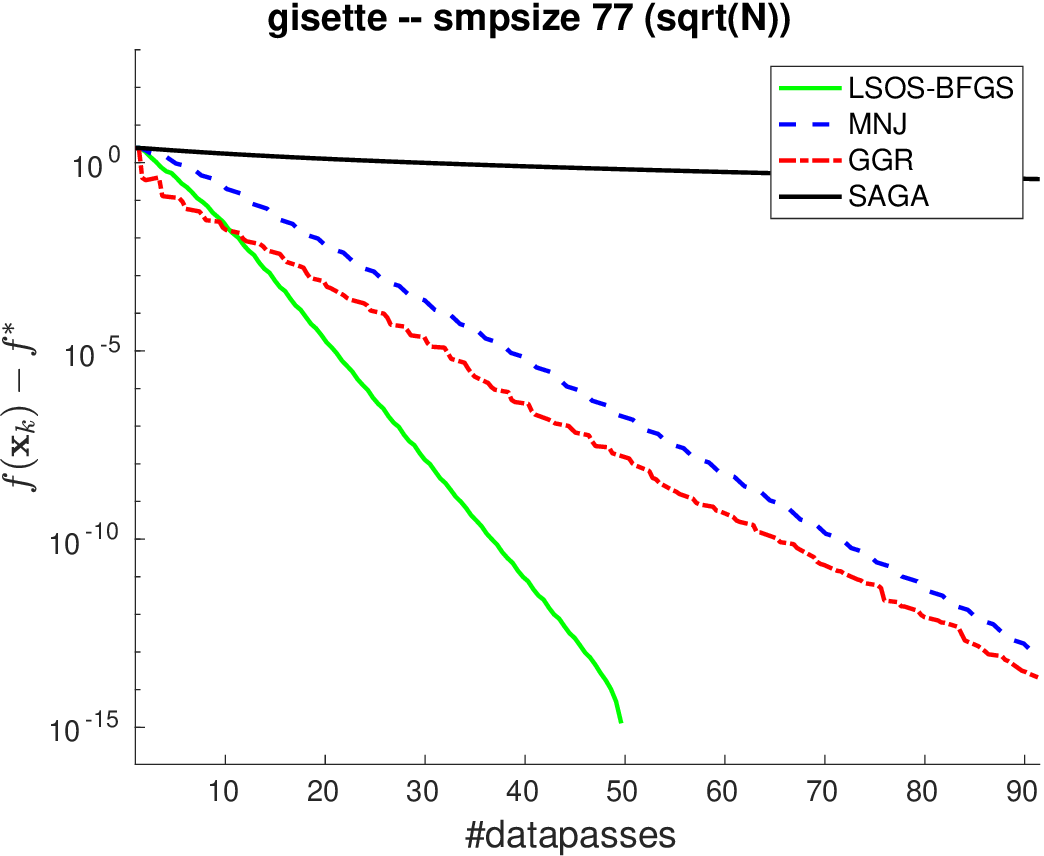}\includegraphics[width=0.47\textwidth]{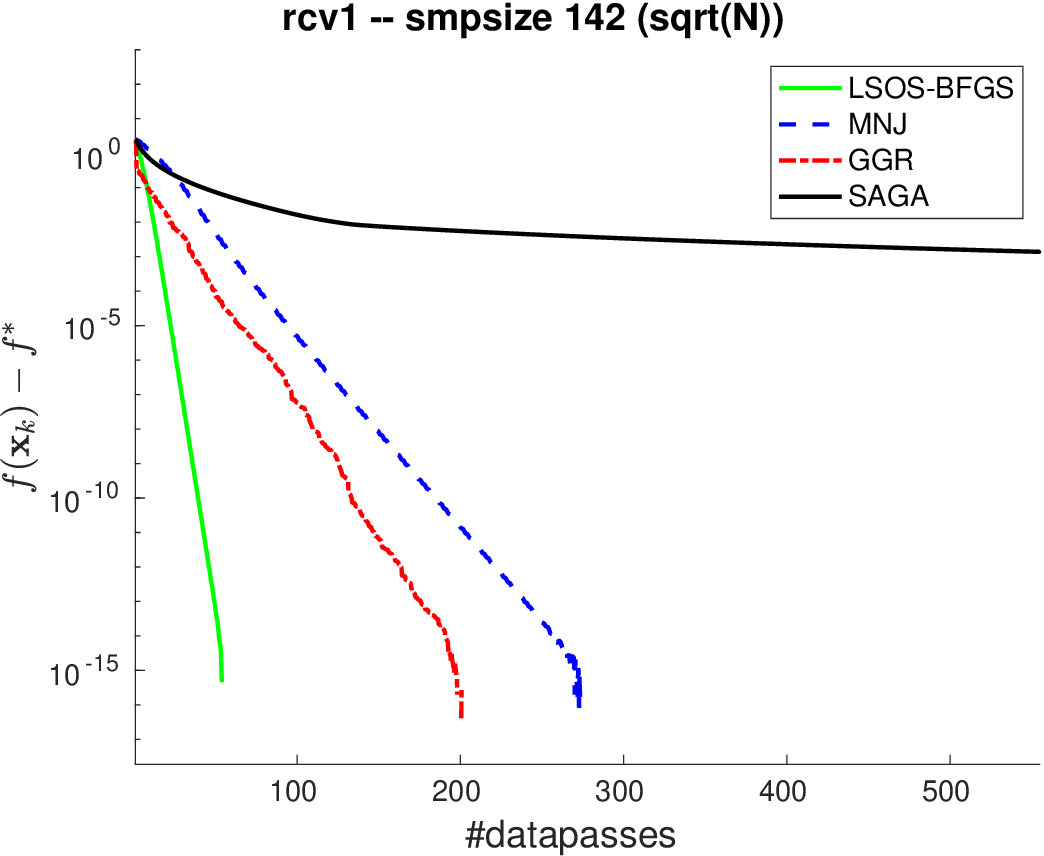} \\[3mm]
    \includegraphics[width=0.47\textwidth]{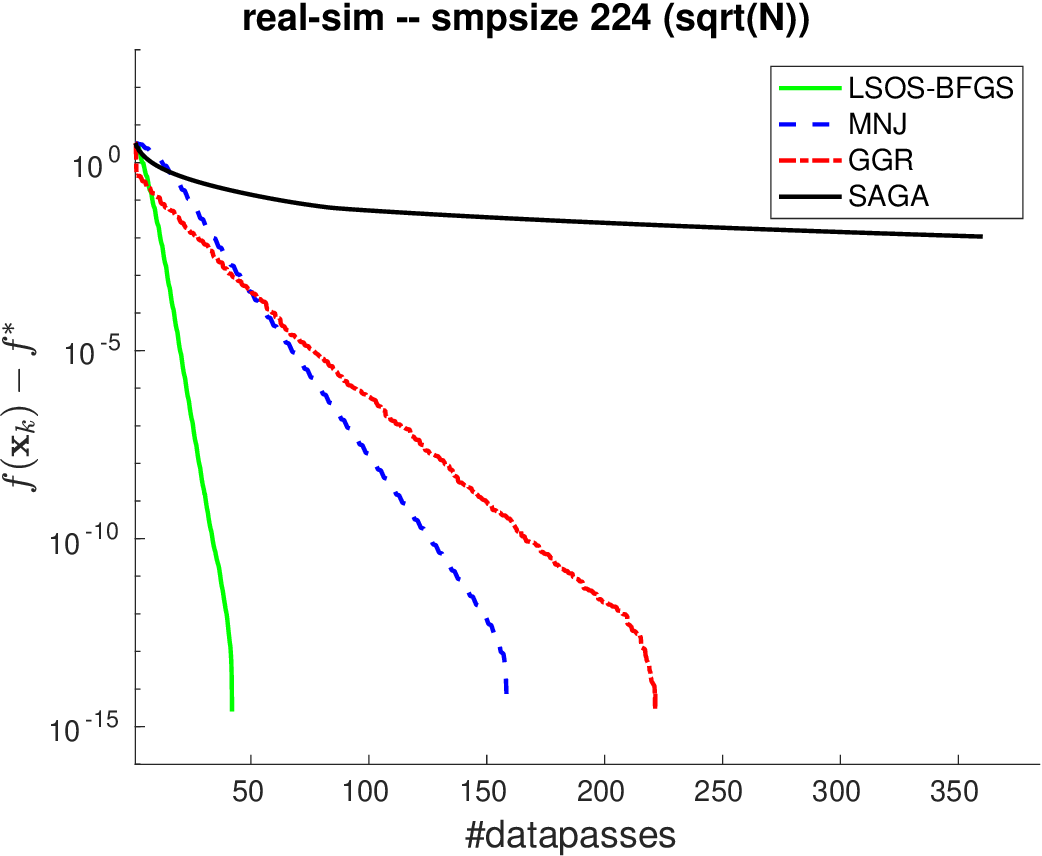}
    \includegraphics[width=0.47\textwidth]{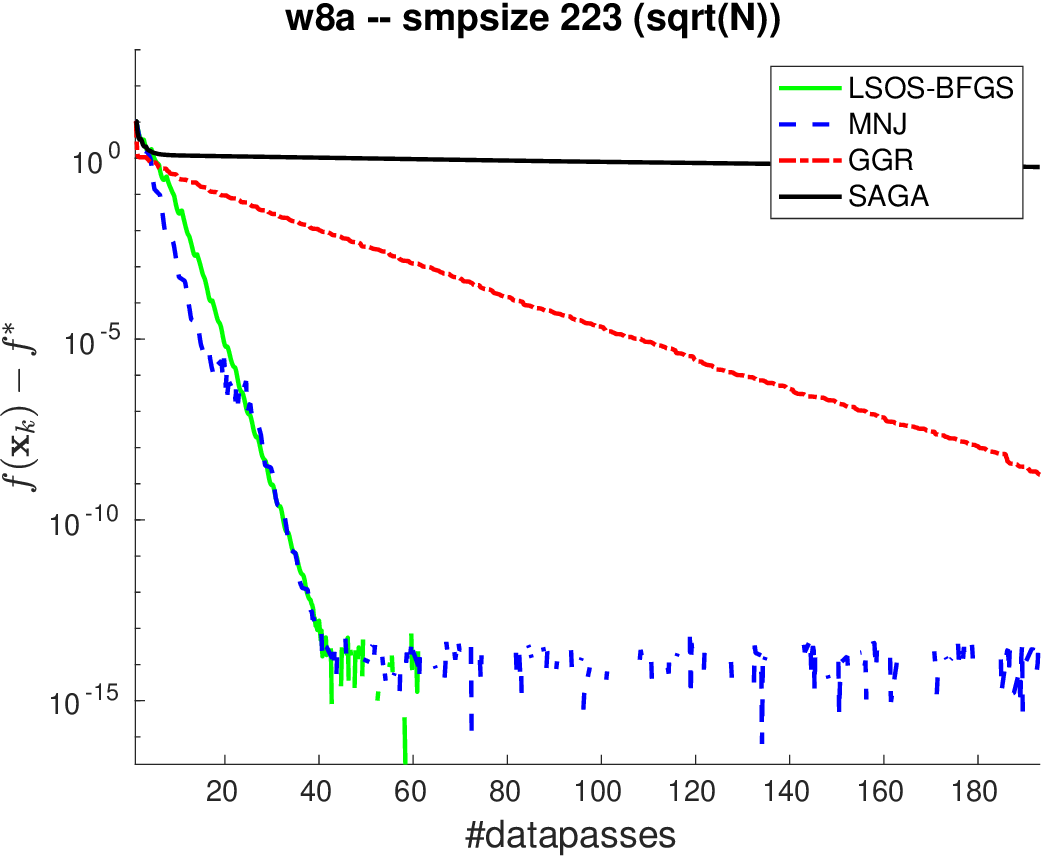}
    \caption{Binary classification problems: comparison of LSOS-BFGS, MNJ, GGR and SAGA in terms of function error versus data passes. \label{fig:binary_class_datapasses}}
\end{figure}

{Figures~\ref{fig:binary_class} and \ref{fig:binary_class_datapasses} show a comparison among the four algorithms in terms of the average absolute error on the
objective function (with respect to the optimal value) versus the average execution time and the number of data passes, respectively}.  As in the previous experiments, the error and the times are averaged over 20 runs and the plots show the 95\% confidence interval with respect to the error. The grid search for defining or initializing the step lengths was performed again on the first of the 20 runs and then fixed for the remaining 19 ones. 

The results {in terms of execution time (Figure~\ref{fig:binary_class})} show that LSOS-BFGS outperforms the other stochastic L-BFGS algorithms
on \textsf{gisette}, and outperforms GGR on \textsf{rcv1},  \textsf{real-sim} and \textsf{w8a}; furthermore, it is comparable with MNJ on \textsf{rcv1}.
Moreover, LSOS-BFGS is always more efficient than the line-search-based mini-batch SAGA, showing that the introduction of stochastic second-order information
is crucial for the performance of the algorithm.

{We now focus on the comparison in terms of number of data passes, which provides a measure of oracle complexity for the four algorithms. By looking at Figure~\ref{fig:binary_class_datapasses} it is clear how, thanks to the use of line searches and SAGA (which does not need to compute a full gradient at each step), LSOS-BFGS is able to outperform its competitors on all the cases but \textsf{w8a}, on which it is comparable with MNJ.}
We observe that in LSOS-BFGS the average number of rejected steps was $0$ for \textsf{gisette} and below 6\% for \textsf{rcv1}, \textsf{real-sim} and \textsf{w8a}. Again, the maximum number of failures ($K_{\max}$) was never reached, and LSOS-BFGS always determined the step lengths by the nonmonotone line search. 

In the second set of experiments on convex problems we compared LSOS-BFGS with the Incremental Quasi-Newton (IQN) method~\cite{mokhtari:2018}. Note that the latter has proven superlinear convergence rate, but high memory requirements, i.e., multiple Hessian matrices to be stored. We used the MATLAB implementation of IQN available from \url{https://github.com/hiroyuki-kasai/SGDLibrary}, but in our opinion we improved it by avoiding some redundant operations and explicit inversions of matrices. We note that for these experiments we had to use smaller datasets (see Table~\ref{tab:datasets_2}), because of the high memory cost of the IQN implementation, which requires storing a full Hessian matrix for each sample in the training set. Thus, in LSOS-BFGS we set the gradient sample size as {$ N_k = 10$ } and the Hessian sample size equal to {$|\mT_j|= 30$.  } The remaining L-BFGS parameters were chosen as in the previous experiments, {including the cardinality of $ {\mathcal D}_k $ equal to 1}. Moreover, we set to $10^{-1}$ the starting value of the step length in the line search for all the four datasets. Recall that for the superlinear convergence property to hold, IQN had to be used with step length $1$.

\begin{table}[ht!]
    {\small
        \caption{Datasets from LIBSVM used in the comparison of LSOS-BFGS with IQN on convex problems. For each dataset the number of training points and the number of features (space dimension) are reported.\label{tab:datasets_2}}
        \begin{center}
            \begin{tabular}{|l|r|r|}
                \hline
                \multicolumn{1}{|l|}{name} & \multicolumn{1}{r|}{$N$} & \multicolumn{1}{r|}{$n$} \\ \hline
                a9a       &  32561 &      123 \\
                cod-rna   &  59535 &        8 \\
                mushrooms &   8124 &      112 \\
                phishing  &  11055 &       68 \\ \hline
            \end{tabular}
        \end{center}
    }
\end{table}
\begin{figure}[h!]
    \includegraphics[width=0.47\textwidth]{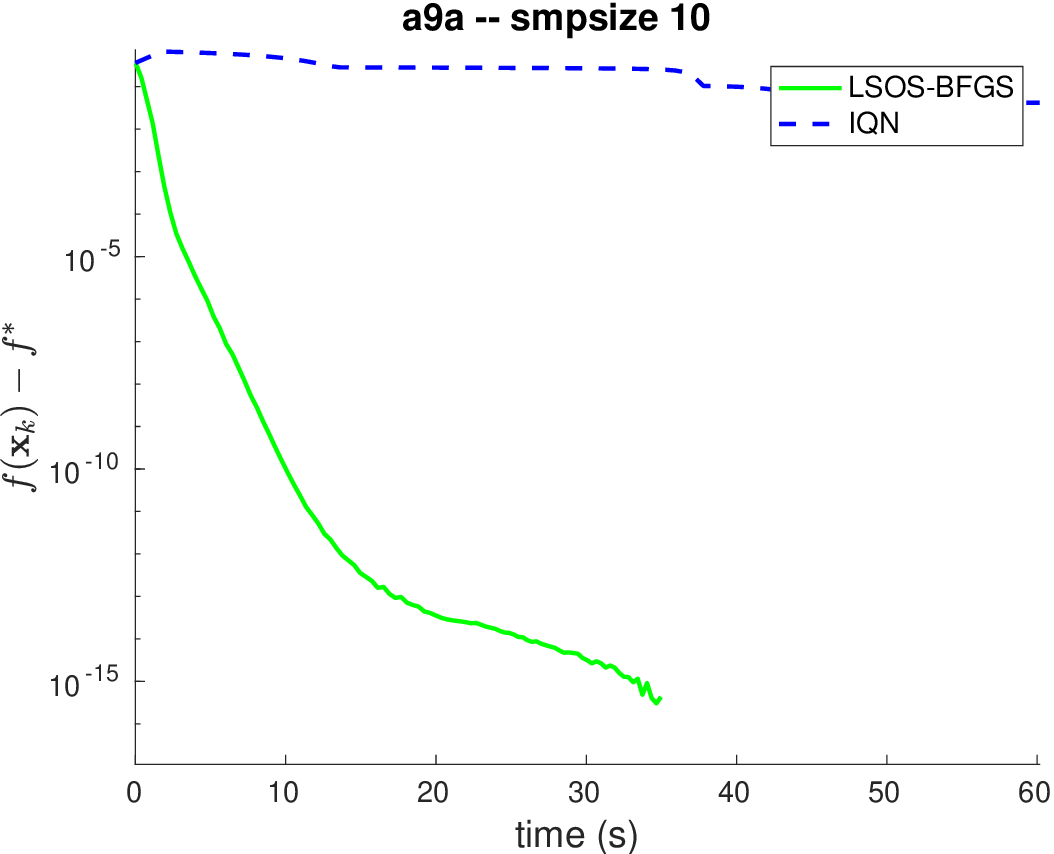}
    \includegraphics[width=0.47\textwidth]{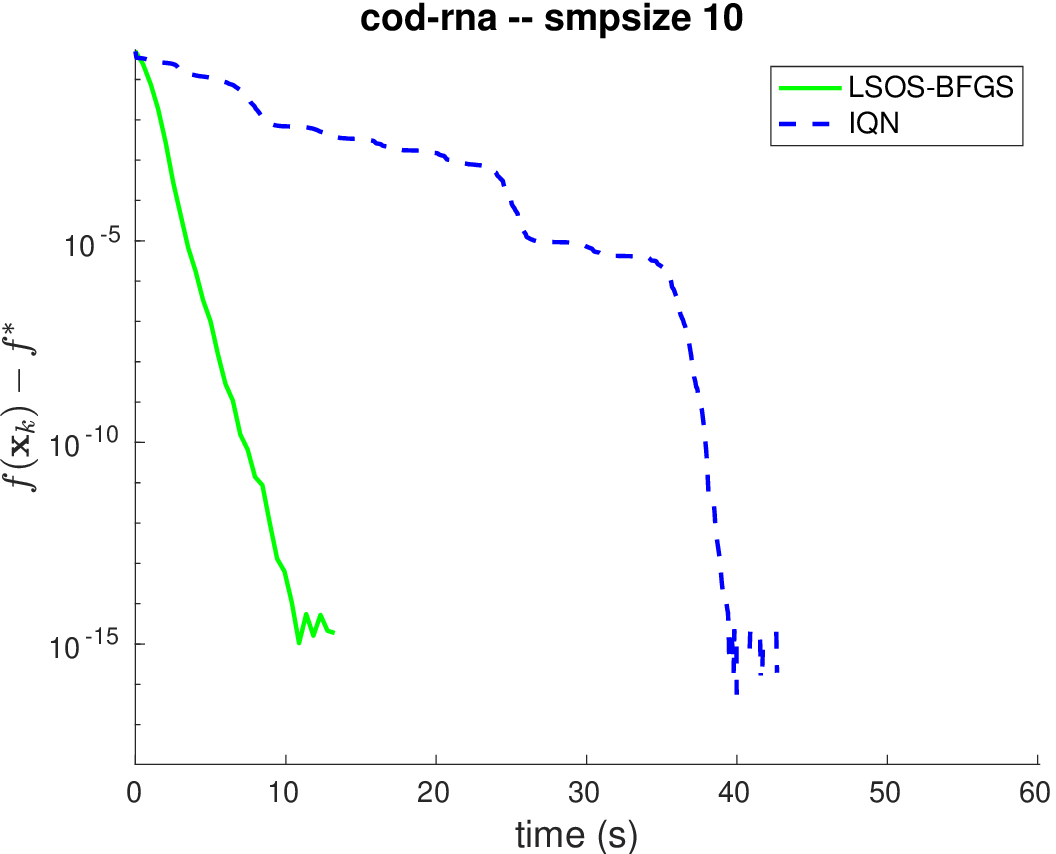}\\
    
    \includegraphics[width=0.47\textwidth]{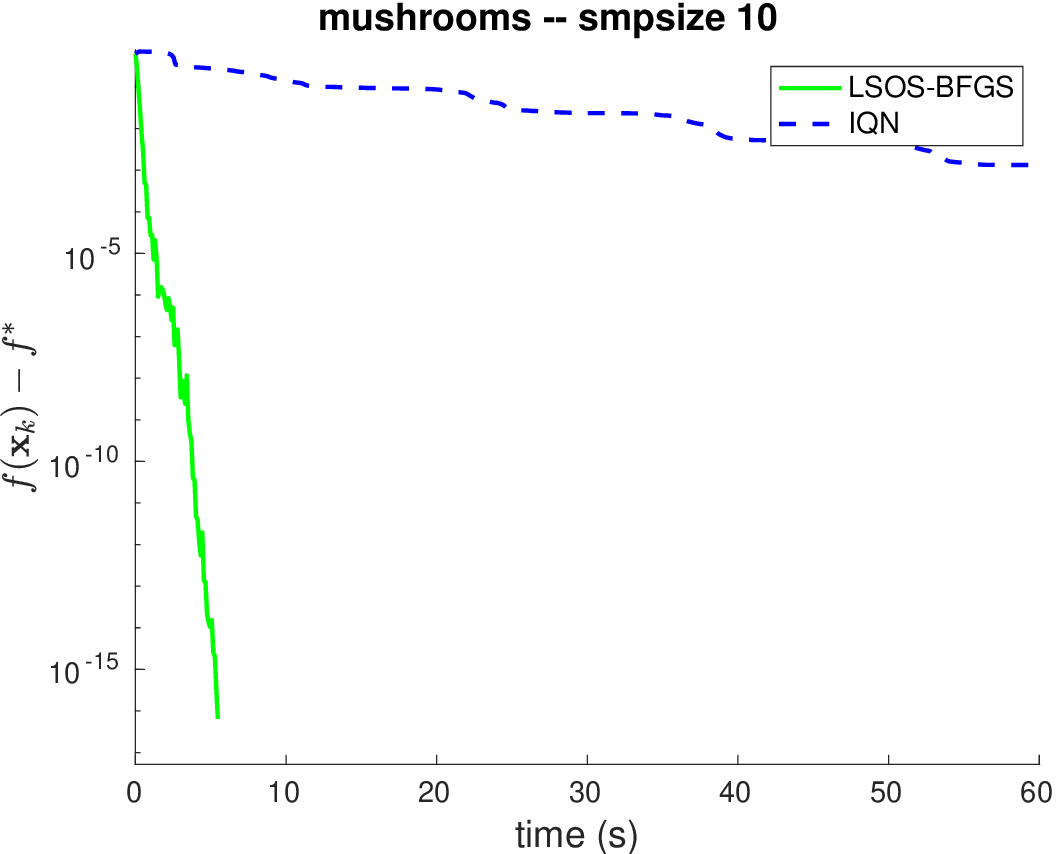}
    \includegraphics[width=0.47\textwidth]{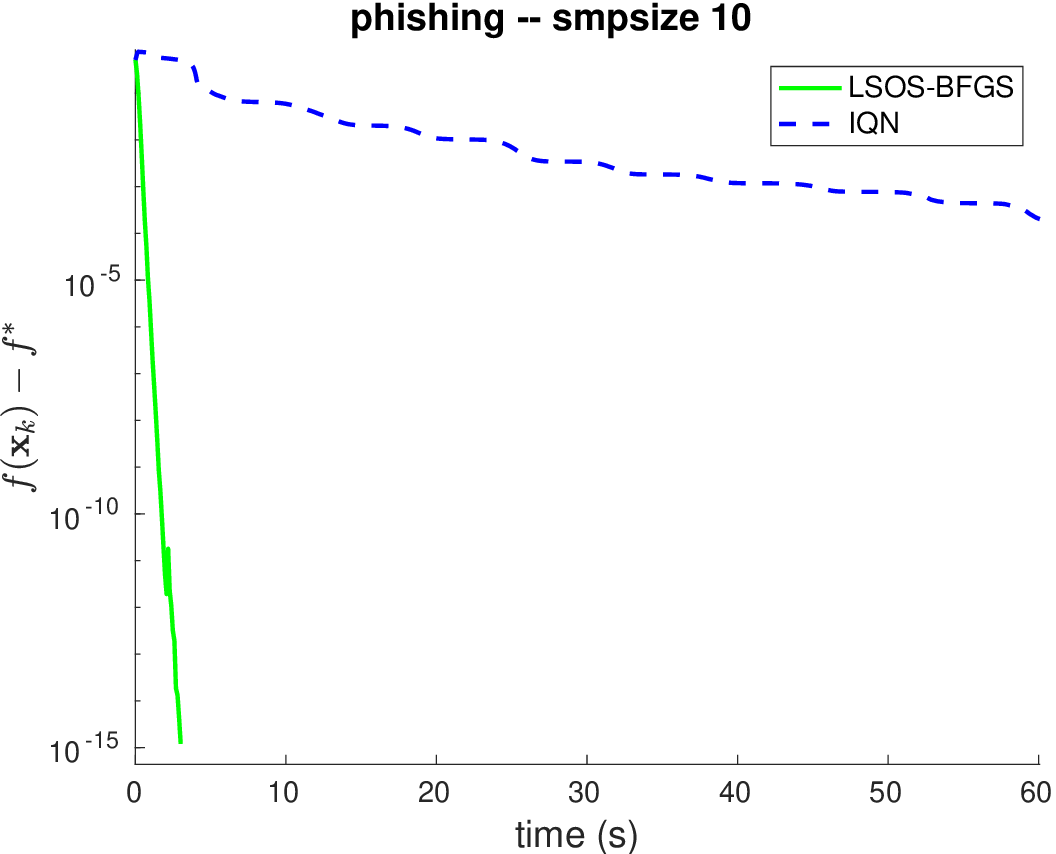}
    
    \caption{Binary classification problems: comparison of LSOS-BFGS and IQN in terms of function error versus execution time.\label{fig:binary_class_iqn_times}}
\end{figure}
\begin{figure}[h!]
    \includegraphics[width=0.47\textwidth]{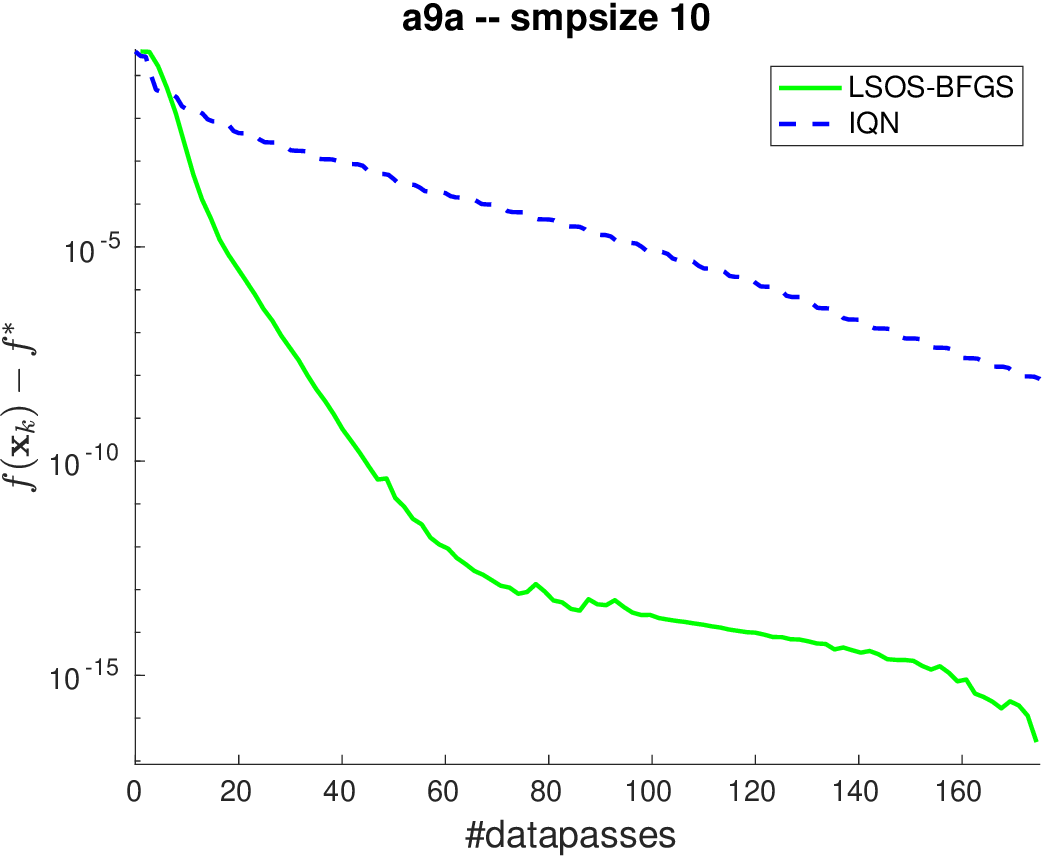}\includegraphics[width=0.47\textwidth]{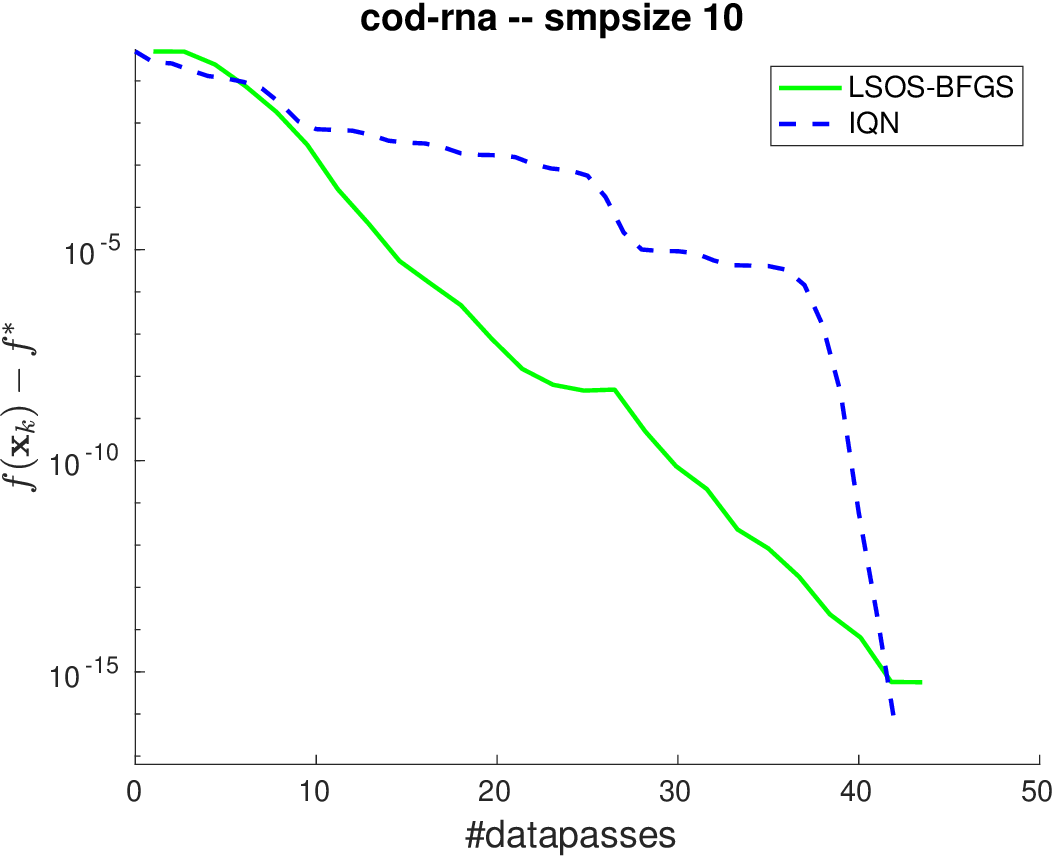}\\
    \includegraphics[width=0.47\textwidth]{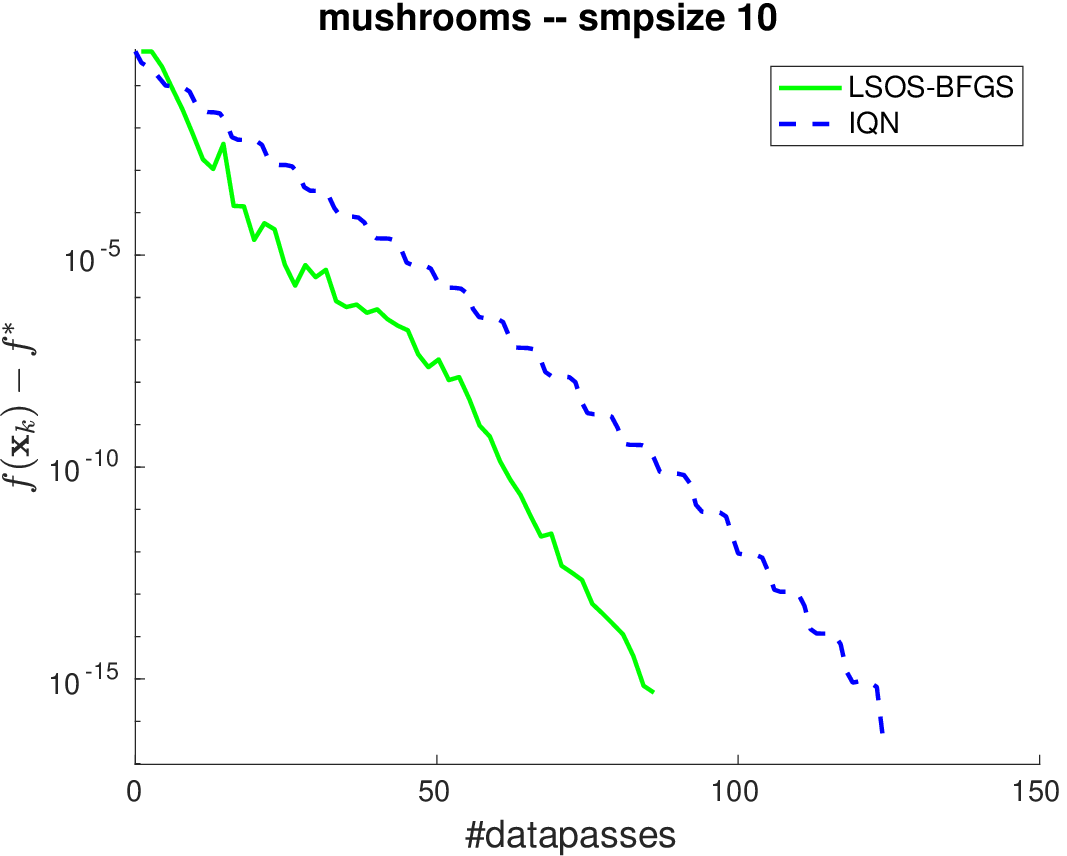}
    \includegraphics[width=0.47\textwidth]{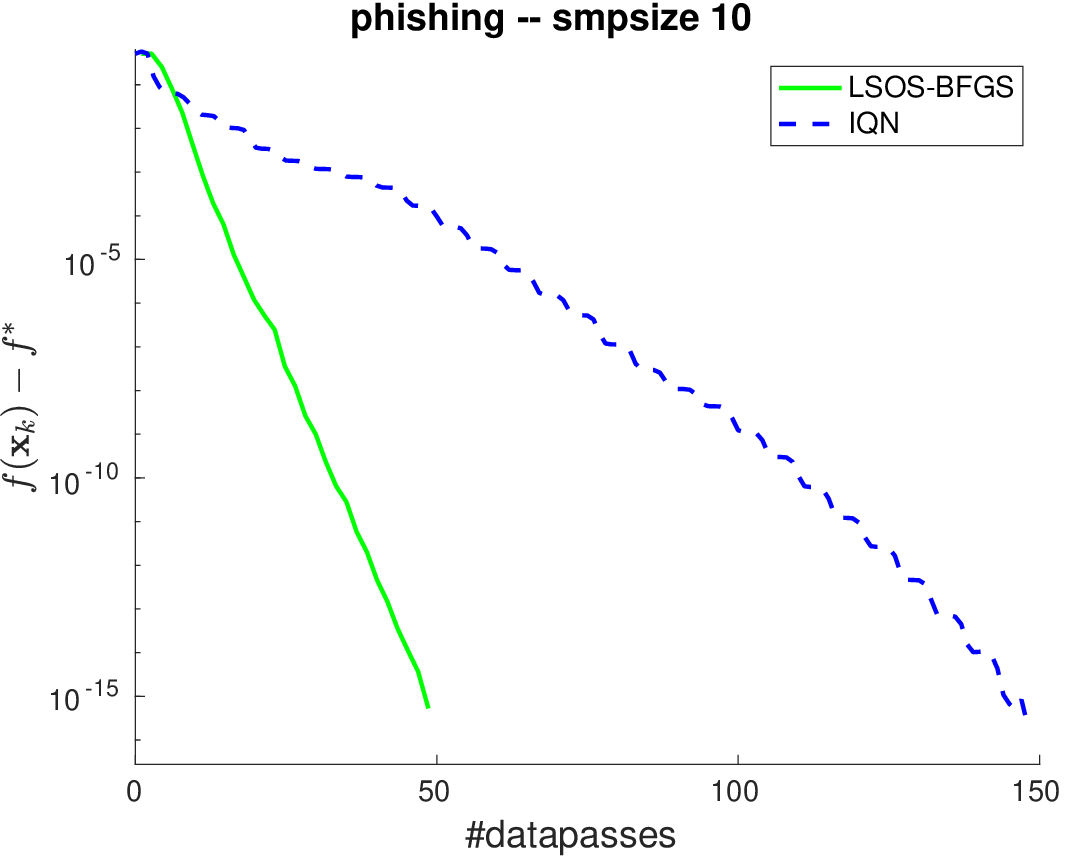}
    \caption{Binary classification problems: comparison of LSOS-BFGS and IQN in terms of function error versus data passes.\label{fig:binary_class_iqn_epochs}}
\end{figure}

Figures~\ref{fig:binary_class_iqn_times} and \ref{fig:binary_class_iqn_epochs} show a comparison between LSOS-BFGS and IQN in terms of the average absolute error on the objective function (with respect to the optimal value computed with the L-BFGS code by Mark Schmidt) versus the average execution time and the number of data passes, respectively. {It is worth mentioning that for IQN a data pass corresponds exactly to an epoch.} For the tests reported in Figure~\ref{fig:binary_class_iqn_times} we set the maximum execution time for the two algorithms equal to 60 seconds. For the tests reported in Figure~\ref{fig:binary_class_iqn_epochs}, we first run LSOS-BFGS for 60 seconds and then run IQN {to perform around as many data passes as the ones} completed by LSOS-BFGS in that time frame. Like the other cases, the values plotted for LSOS-BFGS are averaged over 20 runs. The shaded areas corresponding to the 95\% confidence interval are almost invisible, indicating low variance in the results. Since IQN spans the sample set cyclically, picking one sample at each iteration, IQN was run once for each test.

From Figure~\ref{fig:binary_class_iqn_times} LSOS-BFGS appears more efficient in terms of execution time for all the problems. This is possibly due to the cost which has to be paid to obtain the theoretical superlinear convergence rate in IQN, which requires at each iteration the solution of a dense linear system. Interestingly, despite the superlinear convergence rate of IQN, LSOS-BFGS is also able to outperform it when the comparison is made {in terms of oracle complexity} (Figure~\ref{fig:binary_class_iqn_epochs}). This suggests that, although we cannot prove a superlinear rate of convergence for the algorithms fitting into the LSOS framework, this approach can yield efficient algorithms in practice.

\section{Conclusions and outlook\label{sec:conclusions}}

We introduced a novel stochastic line-search algorithmic framework called LSOS, for the solution of nonconvex finite-sum problems, which allows the use of inexact second-order directions. Almost sure convergence to a stationary point for all the algorithms fitting into the LSOS framework was proved. Moreover, for strongly convex problems, we proved a.s.~convergence of the sequence of iterates to the unique minimizer. Numerical experiments showed that combining stochastic L-BFGS Hessian approximations with the SAGA variance-reduction technique and with line searches produces methods that are highly competitive with state-of-the art first- and second-order stochastic optimization methods {both when accounting for computational time and when accounting for oracle complexity}.

Future work will be focused on the application of methods from the LSOS class to nonconvex problems arising in the training of neural networks. Moreover, we intend to investigate the extension to the stochastic setting of the strategies for combining first- and second-order directions proposed in \cite{diserafino:2020lncs,diserafino:2020amc}. Finally, a challenging future research agenda includes the extension of (some of) the results presented in this work to constrained problems.

\medskip
\noindent{\bf Acknowledgement.} We are thankful to the associate editor and two anonymous referees whose comments helped us improve the paper. 

\providecommand{\bysame}{\leavevmode\hbox to3em{\hrulefill}\thinspace}
\providecommand{\MR}{\relax\ifhmode\unskip\space\fi MR }
\providecommand{\MRhref}[2]{%
    \href{http://www.ams.org/mathscinet-getitem?mr=#1}{#2}
}
\providecommand{\href}[2]{#2}

\end{document}